\def\R{\mathbb{R}}
\theoremstyle{plain}
\newtheorem{theorem}{Theorem}[section]
\newtheorem{utheorem}{\textrm{\textbf{Theorem}}}
\newtheorem{corollary}[theorem]{Corollary}
\newtheorem{lemma}[theorem]{Lemma}
\theoremstyle{definition}
\newtheorem{remark}[theorem]{Remark}
\newtheorem{example}[theorem]{Example}
\numberwithin{equation}{section}
\begin{document}
\title{Sufficient conditions for total positivity, compounds, and Dodgson condensation}

\author{Shaun Fallat}
\address[S.~Fallat]{Department of Mathematics and Statistics, University of Regina, Regina, SK, CA}
\email{\tt shaun.fallat@uregina.ca}

\author{Himanshu Gupta}
\address[H.~Gupta]{Department of Mathematics and Statistics, University of Regina, Regina, SK, CA}
\email{\tt himanshu.gupta@uregina.ca}

\author{Charles R. Johnson}
\address[C.~Johnson]{Department of Mathematics, College of William and Mary, Williamsburg, VA, USA}
\email{\tt crjohn@wm.edu}

\date{\today}

\keywords{totally positive matrix, totally nonnegative matrix, compound matrix, Sylvester's identity, Dodgson condensation, Hankel matrix, bidiogonal factorisation, planar networks}

\subjclass[2010]{Primary 15B48, 15A15; Secondary 15A24.}

\begin{abstract}
   A $n$-by-$n$ matrix is called totally positive ($TP$) if all its minors are positive and $TP_k$ if all of its $k$-by-$k$ submatrices are $TP$. For an arbitrary totally positive matrix or $TP_k$ matrix, we investigate if the $r$th compound ($1<r<n$) is in turn $TP$ or $TP_k$, and demonstrate a strong negative resolution in general. Focus is then shifted to Dodgson's algorithm for calculating the determinant of a generic matrix, and we analyze whether the associated condensed matrices are possibly totally positive or $TP_k$. We also show that all condensed matrices associated with a $TP$ Hankel matrix are $TP$.
\end{abstract}
\maketitle

\section{Introduction and main results}\label{sec:introduction}
Denote the set of all $m$-by-$n$ matrices with entries from $\R$ by $M_{m,n}(\R)$. We write $M_{n}(\R)$ when $m=n$. For $A \in M_{m,n}(\R)$, $\alpha \subseteq \{1,2,\ldots,m\}$, and $\beta \subseteq \{1,2,\ldots,n\}$, the submatrix of $A$ lying in rows indexed by $\alpha$ and columns indexed by $\beta$ is denoted by $A[\alpha,\beta]$. The complimentary submatrix obtained from $A$ by deleting the rows indexed by $\alpha$ and columns indexed by $\beta$ is denoted by $A(\alpha,\beta)$. If $A \in M_{n}(\R)$ and $\alpha=\beta$, then the \emph{principal submatrix} $A[\alpha,\alpha]$ is abbreviated to $A[\alpha]$, and the complementary principal submatrix to $A(\alpha)$. For a set $\alpha \subseteq \{1,2,\ldots,n\}$, we denote its complement by $\alpha^{c}$, and its cardinality to $|\alpha|$. Thus, $A(\alpha,\beta)= A[\alpha^{c},\beta^{c}]$.

A {\em minor} in a given matrix $A$ is the determinant of a (square) submatrix of $A \in M_{m,n}(\R)$. For example, if $|\alpha| = |\beta|$, the ($\alpha,\beta$)-minor of $A$ will be denoted by ${\rm det}A[\alpha,\beta]$, and the $\alpha$-principal minor is denoted by ${\rm det}A[\alpha]$, in the case when $A$ is square. Let $A \in M_{m,n}(\R)$. Then $A$ is called \emph{totally positive} (\emph{totally nonnegative}) if all of its minors are positive (nonnegative). The subset of $M_{m,n}(\R)$ of totally positive matrices (totally nonnegative matrices) will be denoted by $TP$ ($TN$). We often write ``\emph{$A$ is $TP$}" (``\emph{$A$ is $TN$}") to refer the fact that $A$ is a totally positive matrix (totally nonnegative matrix). There are several references available about totally positive/nonnegative matrices. For example, see \cite{And87, fallat2022totally, GK02, Kar68, Sch30}. 

If $A \in M_{m,n}(\R)$, then $A$ is called \emph{$TP_k$} (\emph{$TN_k$}) where $1\leq k \leq \min\{m,n\}$, if all of its minors up to and including order $k$ are positive (nonnegative). In particular, a matrix with positive entries is $TP_1$. Observe that $A$ is $TP$ ($A$ is $TN$) is same as $A$ is $TP_n$ ($A$ is $TN_n$) for a matrix $A\in M_n(\R)$. A \emph{contiguous minor} is a minor that corresponds to consecutive rows and consecutive columns of a given matrix. The following is a classical result. Not all minors need be checked to verify $TP$.

\begin{theorem}[Fekete \cite{Fek13}] \label{fekete}
    Let $A \in M_{m,n}(\R)$. If all contiguous minors up to and including order $k$ are positive, then  $A$ is $TP_k$.  
\end{theorem}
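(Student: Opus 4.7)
The plan is to prove the theorem by induction on $k$. The base case $k=1$ is immediate: order-one contiguous minors are precisely the entries of $A$, so their positivity is the definition of $TP_1$.

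For the inductive step, assume the theorem for $k-1$. Combined with the hypothesis that all contiguous minors up to order $k$ are positive, the inductive hypothesis yields that every minor of order at most $k-1$ is already positive. It remains only to prove positivity of each $k \times k$ minor $\det A[\alpha,\beta]$. I would carry out a secondary induction on the spread
\[
\sigma(\alpha,\beta) := \bigl(\max\alpha - \min\alpha\bigr) + \bigl(\max\beta - \min\beta\bigr) - 2(k-1),
\]
a non-negative integer that vanishes exactly when both $\alpha$ and $\beta$ consist of consecutive indices; in that case the minor is contiguous and positive by hypothesis.

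For the secondary inductive step, suppose $\sigma(\alpha,\beta) > 0$ and, without loss of generality, that $\alpha$ has the gap: there exist consecutive elements $i < i'$ of $\alpha$ with $i' - i \geq 2$ and some $h$ with $i < h < i'$ missing from $\alpha$. The goal is to apply Sylvester's determinant identity with a pivot set chosen to involve $h$, so as to rewrite $\det A[\alpha,\beta]$ in terms of (i) minors of order at most $k-1$ (positive by the outer induction), and (ii) $k$-minors in which the ``bad'' index $i'$ has been replaced by $h$, which have strictly smaller spread than $\sigma(\alpha,\beta)$ (positive by the inner induction). A symmetric argument handles a gap in $\beta$, and the two inductions together cover all $k \times k$ minors.

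The main obstacle is choosing the right form of Sylvester's identity so that the resulting identity expresses $\det A[\alpha,\beta]$ in a manner from which positivity can be read off directly; the classical Desnanot--Jacobi identity contains a minus sign and does not immediately yield positivity of the left-hand side from positivity of the right-hand side terms. This is overcome by selecting a pivot minor of order $k-1$ (known positive by the outer induction), against which Sylvester's identity takes the form of a single product of a smaller-spread $k$-minor with a positive minor, divided by a positive minor, giving the required sign. Once the correct pivot is in place, both inductions close cleanly and the theorem follows.
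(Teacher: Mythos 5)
The paper itself gives no proof of this statement (it is quoted from Fekete with a citation), so your proposal has to be measured against the classical argument, whose skeleton you have essentially reproduced: an outer induction on the order of the minor plus an inner induction on a dispersion/spread quantity, with contiguous minors as the base case. The genuine gap is precisely the step you flag as ``the main obstacle'' and then wave away. There is no form of Sylvester's identity that writes a gapped $k$-minor as a single product of a smaller-spread $k$-minor and a positive lower-order minor divided by a positive minor; every Sylvester/Desnanot--Jacobi-type relation connecting $\det A[\alpha,\beta]$ to the minors obtained by inserting the missing index $h$ is genuinely quadratic with two products, so the minus sign you worry about cannot be made to disappear by a choice of pivot, and a monomial identity of the kind you assert is simply false for generic matrices. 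As written, the inner inductive step is therefore unproved.

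The correct way to close your induction is to keep the three-term relation and rearrange it so that the unknown minor is multiplied by a minor already known to be positive, while the other side is a \emph{sum} of products of known-positive minors. Concretely, write $\alpha=\{i_1<\dots<i_p\}$, let $h\notin\alpha$ with $i_1<h<i_p$ fill a gap, and fix any $c\in\beta$. A classical Pl\"ucker-type exchange identity (checkable directly, or derivable from Sylvester's identity) states
\begin{align*}
\det A[\alpha,\beta]\cdot\det A[(\alpha\setminus\{i_1,i_p\})\cup\{h\},\beta\setminus\{c\}]
&=\det A[(\alpha\setminus\{i_1\})\cup\{h\},\beta]\cdot\det A[\alpha\setminus\{i_p\},\beta\setminus\{c\}]\\
&\quad+\det A[(\alpha\setminus\{i_p\})\cup\{h\},\beta]\cdot\det A[\alpha\setminus\{i_1\},\beta\setminus\{c\}].
\end{align*}
Note that $h$ replaces one of the \emph{extreme} indices $i_1$ or $i_p$ (not the index adjacent to the gap, as you suggest), so both order-$p$ minors on the right have strictly smaller spread and are positive by your inner hypothesis; all order-$(p-1)$ minors appearing are positive by your outer hypothesis; hence $\det A[\alpha,\beta]>0$. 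With this identity (and its column analogue for gaps in $\beta$) substituted for your ``single product'' step, your double induction does close and yields exactly the classical proof of Fekete's theorem.
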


If $A\in M_{m,n}(\R)$ and $k\leq \min\{m,n\}$, the $\binom{m}{k}$-by-$\binom{n}{k}$ matrix of all $k$-by-$k$ minors (with index sets ordered lexicographically) of $A$ is called the \emph{$k$th compound} of $A$ and is denoted by $C_k(A)$. Note that if we choose any ordering other than lexicographic, then we obtain a different matrix. However, it is similar to the compound matrix by some permutation matrix. 

By definition if $A \in M_n(\R)$ is $TP_k$, then $C_k(A)$ is $TP_1$ for all $1\leq k\leq n$. Notice that $C_1(A) = A$ and $C_n(A)$ is a scalar equal to $\det A$. Thus, if $A$ is $TP$, then so are $C_1(A)$ and $C_n(A)$. More interestingly, if $A$ is $TP$, then $C_{n-1}(A)$ is $TP$. This follows from the basic fact that if $A$ is $TP$, then $SA^{-1}S$ is $TP$, where $S=diag(1,-1, \ldots, (-1)^{n-1})$ (see, for example, \cite{GK02}). Furthermore, basic linear algebra dictates that $SA^{-1}S$ is equal to $\frac{1}{\det A}C_{n-1}(A)$. 

A natural question is what we can say about the total positivity of $C_k(A)$ for $2\leq k\leq n-2$. Our first main result gives an answer to this question.

\begin{utheorem}\label{main_thm_1} 
Let $n\geq 4$ and let $A\in M_n(\R)$. If $A$ is $TP_{k+2}$, then $C_k(A)$ is not $TP_3$ for any $2\leq k\leq n-2$. 
\end{utheorem}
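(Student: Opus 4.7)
The plan is to exhibit, for each pair $(k,n)$ in the hypothesized range, an explicit $3 \times 3$ submatrix $M$ of $C_k(A)$ with $\det M = 0$, which immediately precludes $C_k(A)$ from being $TP_3$. My first step would be to reduce to the extremal case $n = k+2$: the leading $(k+2)\times(k+2)$ principal submatrix of a $TP_{k+2}$ matrix is itself $TP_{k+2}$, and its $k$-th compound sits inside $C_k(A)$ as a submatrix, so producing a singular $3 \times 3$ minor in the smaller compound suffices.

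For the base case $n = k+2$, set $\alpha_0 = \{4,5,\ldots,k+1\}$ (of size $k-2$, empty when $k=2$) and $\beta_0 = \{4,5,\ldots,k+2\}$ (of size $k-1$). I would take $R_r = \alpha_0 \cup (\{1,2,3\}\setminus\{r\})$ and $C_s = \{s\}\cup\beta_0$ for $r,s\in\{1,2,3\}$ and set $M_{r,s} = \det A[R_r,C_s]$. For $k=2$ this specialises to the submatrix of $C_2(A)$ with rows $\{1,2\},\{1,3\},\{2,3\}$ and columns $\{1,4\},\{2,4\},\{3,4\}$; its three columns are (up to signs) the cross products $A_{\cdot,s}\times A_{\cdot,4}$ in $\R^3$, all orthogonal to $A_{\cdot,4}$, which forces $\det M=0$. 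The task is to generalise this ``orthogonality'' to arbitrary $k$.

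The main step would be to observe, via Laplace expansion of $\det A[R_r,C_s]$ along column $s$, that the $s$-th column of $M$ equals $\Phi(A_{\cdot,s})$, where $\Phi:\R^{k+2}\to\R^3$ is the linear map $\Phi(v)_r = \langle e_{R_r},\, v\wedge A_{\cdot,4}\wedge\cdots\wedge A_{\cdot,k+2}\rangle$ and $\langle\cdot,\cdot\rangle$ is the standard pairing on $\Lambda^k\R^{k+2}$. Thus it suffices to show $\mathrm{rank}(\Phi)\le 2$. Using the identification $(\Lambda^k\R^{k+2})^*\cong\Lambda^2\R^{k+2}$ via the top form, together with the crucial observation that $R_r^c = \{r,k+2\}$, so $e_{R_r^c}=e_r\wedge e_{k+2}$, a linear relation $\sum_r c_r\Phi(v)_r=0$ for all $v$ becomes equivalent to requiring the vector $u=c_1 e_1+c_2 e_2+c_3 e_3$ to lie in $W:=\mathrm{span}\{e_{k+2},A_{\cdot,4},\ldots,A_{\cdot,k+2}\}$. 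Since $A$ is invertible (being $TP_{k+2}$), the cofactor formula forces $e_{k+2}\notin\mathrm{span}\{A_{\cdot,4},\ldots,A_{\cdot,k+2}\}$ --- the first entry of the last column of $A^{-1}$ is a positive ratio of $TP$ minors --- so $\dim W=k$ and $\dim(\mathrm{span}\{e_1,e_2,e_3\}\cap W)\ge 3+k-(k+2)=1$. A nonzero $u$, and hence the required linear relation and $\det M=0$, therefore exists.

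The main obstacle will be careful sign bookkeeping in the Laplace expansion and the Hodge-type identification $(\Lambda^k)^*\cong\Lambda^2$; I would handle this by expanding everything in the standard basis of exterior powers. An alternative route that sidesteps the signs is to note that the $3\times(k+1)$ matrix $U$ representing $\Phi$ has all entries equal to (signed) $(k-1)$-minors of $A$ sharing the column-set $\beta_0$ --- hence Pl\"ucker coordinates of a single $(k-1)$-plane --- so every $3\times 3$ minor of $U$ vanishes by the quadratic Pl\"ucker relations, yielding $\mathrm{rank}(U)\le 2$ for every $A$.
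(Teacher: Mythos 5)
Your proof is correct, but it follows a genuinely different route from the paper's. The paper makes the same reduction between general $n$ and the extremal case $n=k+2$, but there the authors represent the $TP$ matrix by a planar network (via the bidiagonal factorization of Theorem \ref{thm:bidiag}), compute the leading $4\times 4$ submatrix $S$ of $C_{n-2}(A)$ entrywise by counting vertex-disjoint path families, and show that certain $3\times 3$ minors of $S$ are strictly negative (e.g.\ $\det S[\{1,2,3\},\{1,3,4\}]=-g-j-\ell<0$), which rules out $TP_3$. You instead exhibit a $3\times 3$ minor of $C_k(A)$, with rows $R_r=\{4,\dots,k+1\}\cup(\{1,2,3\}\setminus\{r\})$ and columns $C_s=\{s\}\cup\{4,\dots,k+2\}$, that is exactly zero: the map $\Phi(v)=\bigl(\text{coefficient of } e_{R_r} \text{ in } v\wedge A_{\cdot,4}\wedge\cdots\wedge A_{\cdot,k+2}\bigr)_{r=1,2,3}$ annihilates the $k$ linearly independent vectors $e_{k+2},A_{\cdot,4},\dots,A_{\cdot,k+2}$ (no $R_r$ contains $k+2$, and wedging a column with itself gives $0$), hence has rank at most $2$, so its values at $A_{\cdot,1},A_{\cdot,2},A_{\cdot,3}$ are dependent; this is exactly the content of your dual dimension count with $W$, and since $TP_3$ demands strictly positive $3\times 3$ minors, a vanishing one suffices. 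What each approach buys: yours is essentially hypothesis-free --- the chosen minor of $C_k(A)$ vanishes for every matrix (total positivity enters only to guarantee $\dim W=k$, and even that can be sidestepped, since if $e_{k+2}\wedge A_{\cdot,4}\wedge\cdots\wedge A_{\cdot,k+2}=0$ the relation is automatic) --- and the conclusion is visibly insensitive to how the $k$-subsets are ordered; the paper's network computation uses the $TP$ hypothesis essentially but yields sharper sign information, namely that those compound minors are strictly negative, which the authors exploit in Remark \ref{rem:any_ordering_of_compounds}. The loose ends in your sketch are harmless: the identification $e_{R_r^c}=e_r\wedge e_{k+2}$ introduces signs, so the relation $\sum_r c_r\Phi(v)_r\equiv 0$ corresponds to $\sum_r \pm c_r e_r\in W$ (the bookkeeping you already flag, irrelevant to the existence of a nontrivial relation), and the closing appeal to quadratic Pl\"ucker relations is dispensable, since the kernel observation above gives $\mathrm{rank}(\Phi)\le 2$ directly.
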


An immediate corollary to Theorem \ref{main_thm_1} is as follows. 
\begin{corollary}\label{corollary1}
     Let $n\geq 4$ and let $A\in M_n(\R)$. If $C_{k}(A)$ is $TP_3$ for some $2\leq k\leq n-2$, then $A$ is not $TP_{k+2}$. 
\end{corollary}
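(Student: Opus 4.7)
The aim is to exhibit, for any $TP_{k+2}$ matrix $A \in M_n(\mathbb{R})$ with $n \geq k+2$, a $3 \times 3$ submatrix of $C_k(A)$ whose determinant vanishes identically. Since all index sets in the construction will lie inside $\{1,\ldots,k+2\}$, the computation takes place entirely in the principal submatrix $A' = A[\{1,\ldots,k+2\}]$, which is $TP$ because $A$ is $TP_{k+2}$. It therefore suffices to assume $n = k+2$ and $A$ is $TP$.

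\textit{Case $k=2$.} Consider the $3 \times 3$ submatrix of $C_2(A)$ with rows indexed by $\{2,3\},\{2,4\},\{3,4\}$ and columns indexed by $\{1,2\},\{1,3\},\{1,4\}$. Setting $c_l := A[\{2,3,4\}, l] \in \mathbb{R}^3$ for $l \in \{1,2,3,4\}$, the $l$-th column of the submatrix consists of the $2 \times 2$ minors $\det A[\{i,j\},\{1,l\}]$ with $\{i,j\} \subseteq \{2,3,4\}$, which is precisely the bivector $c_1 \wedge c_l \in \wedge^2 \mathbb{R}^3$ (the Plücker vector of the plane $\mathrm{span}(c_1,c_l)$). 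Hence all three columns lie in the $2$-dimensional image of the map $v \mapsto c_1 \wedge v$ on $\mathbb{R}^3$, so they are linearly dependent and the $3\times 3$ determinant is identically $0$. Since $0$ is not positive, $C_2(A)$ is not $TP_3$.

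\textit{Case $k \geq 3$, via Jacobi duality.} Put $S = \mathrm{diag}(1,-1,\ldots,(-1)^{k+1})$ and $B = SA^{-1}S$, which is $TP$ by a classical result, with $\det B = 1/\det A$. Jacobi's complementary minor identity, together with the sign cancellations contributed by $S$ in both the row and column index sets, yields the sign-free relation
\[
\det A[\gamma,\delta] \;=\; \det A \cdot \det B[\delta^c,\gamma^c] \qquad (|\gamma| = |\delta| = k),
\]
where complements are taken in $\{1,\ldots,k+2\}$. Consequently, any $3\times 3$ minor of $C_k(A)$ with rows $\gamma_1,\gamma_2,\gamma_3$ and columns $\delta_1,\delta_2,\delta_3$ equals $(\det A)^3$ times the $3\times 3$ minor of $C_2(B)$ with rows $\delta_1^c,\delta_2^c,\delta_3^c$ and columns $\gamma_1^c,\gamma_2^c,\gamma_3^c$. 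Choosing these complements to match the identically zero minor from the $k=2$ case, namely taking the $C_k(A)$ rows to be $\{1,2\}^c,\{1,3\}^c,\{1,4\}^c$ and columns to be $\{2,3\}^c,\{2,4\}^c,\{3,4\}^c$, produces an identically zero $3\times 3$ minor of $C_k(A)$, so $C_k(A)$ is not $TP_3$.

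The main obstacle is the clean derivation of the Jacobi-type sign cancellation used to obtain the displayed duality; once that is in place, the geometric zero-determinant argument from the $k=2$ case transplants uniformly to every admissible $k$.
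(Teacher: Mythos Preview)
Your proof is correct and takes a genuinely different route from the paper. In the paper, Corollary~\ref{corollary1} is simply the contrapositive of Theorem~\ref{main_thm_1}, whose proof proceeds by fixing the base case $k=n-2$, representing the $TP$ matrix $A$ by a planar network, explicitly computing the entries of the $4\times 4$ leading principal submatrix $S$ of $C_{n-2}(A)$ in terms of the network weights, and then exhibiting several $3\times 3$ minors of $S$ that are \emph{strictly negative}; the general $k$ then follows by passing to a $(k+2)\times(k+2)$ $TP$ submatrix.

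Your argument instead takes $k=2$ as the base case and, via the rank--two image of the linear map $v\mapsto c_1\wedge v$ on $\mathbb{R}^3$, produces a $3\times 3$ minor of $C_2(A)$ that is \emph{identically zero}; you then transport this to arbitrary $k$ using the sign-free Jacobi duality $\det A[\gamma,\delta]=\det A\cdot\det B[\delta^c,\gamma^c]$ with $B=SA^{-1}S$, which you derived correctly. This is more elementary and conceptual: no bidiagonal factorization or network combinatorics is needed, and the vanishing minor is invariant under permutations, so your argument (unlike the paper's, which requires the separate Remark checking all $4!$ orderings) immediately shows the conclusion persists under any ordering of the index sets. Conversely, the paper's computation yields finer information---explicit negative \emph{and} positive $3\times 3$ minors---so it additionally rules out that some signed permutation of $C_k(A)$ could be $TP_3$, and it localizes the failure inside a principal submatrix of the compound.
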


Another important set of matrices associated with a matrix are its Dodgson condensations. The Dodgson condensation is a method developed by
C.~Dodgson (aka.~Lewis Carol) \cite{Carrol} to compute the determinant of a square matrix. The set of matrices derived from this method are called Dodgson condensation matrices. A key ingredient of this technique is the well-known Sylvester's determinantal identity.

\begin{theorem}[Sylvester's determinantal identity \cite{HJ85}]\label{thm:sylvester_identity}
Let $A \in M_{n}(\R)$, $\alpha \subseteq \{1,2,\ldots,n\}$, and suppose $|\alpha|=k$. Define the $(n-k) $-by-$ (n-k)$
matrix $B = [b_{i,j}]$, with $i,j \in \alpha^{c}$, by setting $b_{i,j}={\rm det}A[\alpha \cup \{i\},\alpha \cup \{j\}]$, for every $i,j \in \alpha^{c}$. Then, for each
$\delta, \gamma \subset \alpha^{c}$, with $|\delta|=|\gamma|=l$,
\begin{equation}
{\rm det}B[\delta,\gamma]=({\rm det}A[\alpha])^{l-1}
{\rm det}A[\alpha \cup \delta, \alpha \cup \gamma]. 
\label{sylvester-intro}
\end{equation}
\end{theorem}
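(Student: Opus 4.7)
The plan is to reduce the identity to the Schur complement formula by first making a simultaneous permutation of rows and columns so that $\alpha=\{1,\ldots,k\}$ occupies the top-left block of $A$. Both sides of \eqref{sylvester-intro} transform by the same sign under this operation (the minor on the right picks up the sign of the permutation restricted to $\alpha\cup\delta$ and $\alpha\cup\gamma$, which agrees with the effect on the rows and columns of $B[\delta,\gamma]$), so it suffices to prove the identity in this canonical case. I would also reduce first to the generic case $\det A[\alpha]\neq 0$; since both sides of \eqref{sylvester-intro} are polynomials in the entries of $A$, a density/continuity argument extends the identity to all of $M_n(\R)$ once it is established on the Zariski-open set where $\det A[\alpha]\neq 0$.

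Assuming now $\alpha=\{1,\ldots,k\}$ and $\det A[\alpha]\neq 0$, write $A$ in block form
\[
A=\begin{pmatrix} A_{11} & A_{12}\\ A_{21} & A_{22}\end{pmatrix},\qquad A_{11}=A[\alpha],
\]
and let $S=A_{22}-A_{21}A_{11}^{-1}A_{12}$ be the Schur complement of $A_{11}$ in $A$, indexed by $\alpha^c$. The first key step is to identify the entries of $B$ with those of $S$ up to a scalar. For any $i,j\in\alpha^c$, the submatrix $A[\alpha\cup\{i\},\alpha\cup\{j\}]$ is a bordered matrix whose Schur complement with respect to $A_{11}$ is the scalar $a_{ij}-A[\{i\},\alpha]\,A_{11}^{-1}\,A[\alpha,\{j\}]=S_{ij}$; hence the classical block-determinant formula yields
\[
b_{i,j}=\det A[\alpha\cup\{i\},\alpha\cup\{j\}]=(\det A_{11})\,S_{ij},
\]
so $B=(\det A_{11})\,S$ as $(n-k)\times(n-k)$ matrices.

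The second key step applies the block-determinant formula once more, but now to the larger submatrix $A[\alpha\cup\delta,\alpha\cup\gamma]$. Its top-left block is still $A_{11}$, and its Schur complement with respect to $A_{11}$ is precisely $S[\delta,\gamma]$. Thus
\[
\det A[\alpha\cup\delta,\alpha\cup\gamma]=(\det A_{11})\,\det S[\delta,\gamma].
\]
Taking determinants in the identity $B[\delta,\gamma]=(\det A_{11})\,S[\delta,\gamma]$ gives $\det B[\delta,\gamma]=(\det A_{11})^{l}\det S[\delta,\gamma]$, and substituting $\det S[\delta,\gamma]=\det A[\alpha\cup\delta,\alpha\cup\gamma]/\det A_{11}$ delivers \eqref{sylvester-intro}.

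The main obstacle I anticipate is purely bookkeeping: being careful that the permutation reduction to $\alpha=\{1,\ldots,k\}$ affects the two sides of \eqref{sylvester-intro} by exactly matching signs, and that the sub-indexing of $S$ by the shifted sets $\delta,\gamma\subset\alpha^c$ is consistent between the two applications of the block-determinant formula. Once that indexing is set up cleanly, the proof is essentially two invocations of the Schur complement identity plus the density argument that removes the hypothesis $\det A[\alpha]\neq 0$.
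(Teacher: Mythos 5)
The paper does not prove this statement at all: it is quoted as a classical result with a citation to Horn and Johnson, so there is no in-paper argument to compare against. Your proposal is a correct, self-contained proof, and it is essentially the standard textbook route: after the order-preserving relabelling that puts $\alpha$ in the leading position, the two invocations of the block-determinant (Schur complement) formula give $B=(\det A[\alpha])\,S$ entrywise and $\det A[\alpha\cup\delta,\alpha\cup\gamma]=\det A[\alpha]\cdot\det S[\delta,\gamma]$, and taking determinants of $B[\delta,\gamma]$ yields \eqref{sylvester-intro}; the extension from the Zariski-dense set $\det A[\alpha]\neq 0$ to all of $M_n(\R)$ by polynomial continuity is also fine. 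The one step you wave at rather than verify is the sign bookkeeping in the reduction to $\alpha=\{1,\ldots,k\}$: it does work out, because with the order-preserving relabelling the minor $\det A[\alpha\cup\delta,\alpha\cup\gamma]$ changes by $\epsilon_{\delta}\,\epsilon_{\gamma}$, where $\epsilon_{\delta}=(-1)^{\#\{(a,d)\in\alpha\times\delta\,:\,a>d\}}$, while each entry $b_{i,j}$ changes by $\epsilon_{\{i\}}\epsilon_{\{j\}}$, so $\det B[\delta,\gamma]$ changes by $\prod_{i\in\delta}\epsilon_{\{i\}}\prod_{j\in\gamma}\epsilon_{\{j\}}=\epsilon_{\delta}\,\epsilon_{\gamma}$ as well, and the right-hand factor $\det A[\alpha]$ is unchanged. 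Spelling that two-line computation out would close the only gap; alternatively you can skip the permutation entirely and run the same Schur-complement computation with submatrices indexed directly by $\alpha$ and $\alpha^{c}$.
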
 
Observe that a special case of (\ref{sylvester-intro}) is that ${\rm det}B =({\rm det}A[\alpha])^{n-k-1}{\rm det}A$. Another very useful special case is the following. Let $A \in M_{n}(\R)$ be the partitioned matrix
\begin{align*}
     A = \left[ \begin{array}{ccc} a_{11} & a_{12}^{T} & a_{13}
\\ a_{21} & A_{22} & a_{23} \\ a_{31} & a_{32}^{T} & a_{33}
\end{array} \right],
\end{align*}
where $A_{22} \in M_{n-2}(\R)$, $a_{12},a_{21},a_{23},a_{32} \in \R^{n-2}$, and
$a_{11}, a_{13}, a_{31}, a_{33}$ are scalars. Define the matrices

\begin{align*}
    B= \left[ \begin{array}{cc} a_{11} & a_{12}^{T} \\ a_{21} & A_{22}  \end{array} \right],\ C= \left[ \begin{array}{cc} a_{12}^{T} & a_{13} \\ A_{22} &
a_{23}  \end{array} \right],\ D= \left[ \begin{array}{cc} a_{21} & A_{22} \\ a_{31} & a_{32}^{T} 
\end{array} \right], \text{ and } E= \left[ \begin{array}{cc} A_{22} & a_{23} \\ a_{32}^{T} &
a_{33}  \end{array} \right].
\end{align*}
If we let $\tilde{b} ={\rm det}B$, $\tilde{c} ={\rm det}C$,
$\tilde{d} ={\rm det}D$, and $\tilde{e} ={\rm det}E$, then by
(\ref{sylvester-intro}) it follows that
\[ {\rm det} \left[ \begin{array}{cc} \tilde{b} &\tilde{c} \\
\tilde{d} & \tilde{e}  \end{array} \right] = {\rm det}A_{22}{\rm
det}A. \] 
Hence, provided ${\rm det}A_{22} \neq 0$, we have 
\[ {\rm det}A= \frac{{\rm det}B{\rm det}E-{\rm det}C{\rm det}D}{{\rm
det}A_{22}}. \]

Dodgson's condensation produces a sequence of matrices $D_i(A) \in M_{n-i}(\R)$ ($1\leq i \leq n-1$) from a given matrix $A \in M_n(\R)$ to which a recursive step is implemented resulting in a scalar whose value equal to $\det A$ (see \cite{Carrol}). This algorithm can be described as follows (see also \cite{dodgson}): Given $A=[a_{i,j}] \in M_n(\R)$, 

\begin{description}
\item[Step 1] Construct a matrix $D_1(A)=[M^{(1)}_{i,j}] \in M_{n-1}(\R)$, where for 
$i,j = 1,2, \ldots, n-1$,
\[ M^{(1)}_{i,j} = \det \left[ \begin{array}{cc} 
a_{i,j} & a_{i, j+1}  \\ a_{i+1,j} & a_{i+1,j+1} \end{array} \right];\]

\item[Step 2] Using both $A$ and $D_1(A)$, construct another matrix $D_2(A)=[M^{(2)}_{i,j}] \in M_{n-2}(A)$, where for
$i,j = 1,2, \ldots, n-2$,
\[ M^{(2)}_{i,j} = \det \left[ \begin{array}{cc} 
M^{(1)}_{i,j} &M^{(1)}_{i,j+1}  \\ M^{(1)}_{i+1,j} & M^{(1)}_{i+1,j+1} \end{array} \right]/a_{i+1,j+1}; \]

\item[Step 3] Replace $A$ with $D_1(A)$, $D_1(A)$ with $D_2(A)$, and repeat Step 2, until a scalar is produced.
\end{description}
The above algorithm is a simple mechanism for computing the determinant of a matrix. We refer to a matrix $D_k(A)$ as a \emph{Dodgson condensation matrix}. Applying Sylvester's identity (Theorem \ref{thm:sylvester_identity}) and following the discussion afterwards, it can be seen that the entries of $D_2(A)$ are in fact equal to 
\[ M_{i,j}^{(2)} = \det A[\{i,i+1,i+2\},\{j,j+1,j+2\}].\]
In other words, the entries of $D_2(A)$ are precisely all of the contiguous minors of order $3$ of $A$. In fact, by using the Sylvester's identity repeatedly, the above claim holds more generally as follows.
\begin{lemma} \label{Dk-entries}
    Let $A \in M_n(\R)$ and let $D_k(A) = [M_{i,j}^{(k)}]$. Then for any $i,j=1,2,\ldots, n-k$ we have 
    \[ M_{i,j}^{(k)} = \det A[\{i,\ldots, i+k\},\{j,\ldots, j+k\}].\] 
\end{lemma}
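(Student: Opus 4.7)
The plan is to induct on $k$, with the base case supplied by the algorithm's definition and the inductive step powered by Sylvester's identity. For $k=1$, the claim is exactly Step 1 of the algorithm; the case $k=2$ is the computation already recorded in the paragraph preceding the lemma. It is convenient to set $M^{(0)}_{i,j} := a_{i,j}$, so that the recursion coming out of Step 3 reads uniformly
\[
M^{(k)}_{i,j} \;=\; \frac{M^{(k-1)}_{i,j}\,M^{(k-1)}_{i+1,j+1} \;-\; M^{(k-1)}_{i,j+1}\,M^{(k-1)}_{i+1,j}}{M^{(k-2)}_{i+1,j+1}}
\]
for every $k \geq 2$ and every admissible $i,j$.

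For the inductive step, assume the formula holds for both $k-1$ and $k-2$. Substituting the inductive hypothesis into the recursion above, the four factors in the numerator become the four $k\times k$ minors of $A$ with row sets $\{i,\dots,i+k-1\}$ or $\{i+1,\dots,i+k\}$ and column sets $\{j,\dots,j+k-1\}$ or $\{j+1,\dots,j+k\}$, while the denominator equals $\det A[\{i+1,\dots,i+k-1\},\{j+1,\dots,j+k-1\}]$.

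The main step is then to recognize the resulting expression as the $2\times 2$ corner case of Sylvester's identity (precisely the consequence of Theorem \ref{thm:sylvester_identity} worked out immediately before the lemma), applied now to the $(k+1)\times(k+1)$ submatrix $A[\{i,\dots,i+k\},\{j,\dots,j+k\}]$ with the central block $A_{22} = A[\{i+1,\dots,i+k-1\},\{j+1,\dots,j+k-1\}]$. Under this identification, the four corner determinants $\tilde b,\tilde c,\tilde d,\tilde e$ match the four $k\times k$ minors in the numerator exactly, and the identity returns $\det A[\{i,\dots,i+k\},\{j,\dots,j+k\}]$, closing the induction.

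The only subtlety worth flagging is that Dodgson's algorithm requires $M^{(k-2)}_{i+1,j+1} \neq 0$ for the division in Step 2 to be defined; the identity I am proving is really a polynomial identity in the entries of $A$. The standard workaround is to view both sides as rational functions in the $a_{i,j}$ (equivalently, to work with a generic $A$ in the polynomial ring) and to invoke continuity, which is the usual treatment of the Dodgson recursion and causes no genuine difficulty.
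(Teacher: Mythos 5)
Your proof is correct and follows essentially the same route the paper takes: the paper justifies the lemma by noting that Sylvester's identity, in the $2\times 2$ corner form derived after Theorem \ref{thm:sylvester_identity}, can be applied repeatedly, which is exactly your induction on $k$. Your explicit write-up of the recursion, the identification of the four corner minors, and the remark about nonzero denominators (handled by genericity/continuity) simply fill in details the paper leaves implicit.
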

Hence, the matrix $D_k(A)$ consists of all contiguous minors of order $k+1$ of $A$. So if $A$ is $TP_{k+1}$, then $D_{k}(A)$ is $TP_1$ for any $1\leq k\leq n-1$. In fact, we can say more about the total positivity of a Dodgson condensation matrix. That is our second main result.  

\begin{utheorem}\label{main_thm_2}
    Let $A \in M_n(\R)$ and let $k \geq 1$ be an integer. Then the following statements hold:
    \begin{enumerate}
         \item If $A$ is $TP_{k+2}$, then $D_k(A)$ is $TP_2$.
          \item If $A$ is $TP_{k+3}$, then $D_k(A)$ is $TP_3$.
    \end{enumerate}
\end{utheorem}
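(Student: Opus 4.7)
My plan is to reduce both parts to verifying positivity of \emph{contiguous} minors of $D_k(A)$ via Fekete's theorem (Theorem \ref{fekete}), and then to compute those minors by one or two nested applications of Sylvester's identity (Theorem \ref{thm:sylvester_identity}).

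For part (1), I would apply Sylvester's identity directly to the contiguous submatrix $A[\{i,\ldots,i+k+1\},\{j,\ldots,j+k+1\}]$ of $A$, whose four ``corner'' $(k+1)$-by-$(k+1)$ minors are precisely the entries of a contiguous $2$-by-$2$ block of $D_k(A)$. The special case displayed after Theorem \ref{thm:sylvester_identity} then yields
\[
M^{(k)}_{i,j}M^{(k)}_{i+1,j+1}-M^{(k)}_{i,j+1}M^{(k)}_{i+1,j}\;=\;M^{(k-1)}_{i+1,j+1}\cdot M^{(k+1)}_{i,j},
\]
a product of a $k$-by-$k$ and a $(k+2)$-by-$(k+2)$ minor of $A$ (adopting the convention $D_0(A):=A$). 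Under the hypothesis $A\in TP_{k+2}$ both factors are positive, and Fekete finishes (1).

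For part (2), let $\Delta$ denote the contiguous $3$-by-$3$ minor of $D_k(A)$ at position $(i,j)$ and set $P:=D_k(A)[\{i,i+1,i+2\},\{j,j+1,j+2\}]$. Applying Sylvester's identity once more---this time to the $3$-by-$3$ matrix $P$ itself, with distinguished index set $\{2\}$---gives
\[
M^{(k)}_{i+1,j+1}\,\Delta\;=\;B_{11}B_{33}-B_{13}B_{31},
\]
where each $B_{ab}$ is itself a contiguous $2$-by-$2$ minor of $D_k(A)$. By the factorization from part (1), each $B_{ab}$ splits as an entry of $D_{k-1}(A)$ times an entry of $D_{k+1}(A)$, and regrouping gives $M^{(k)}_{i+1,j+1}\Delta=XU-YV$, where $X-Y$ is the contiguous $2$-by-$2$ minor of $D_{k-1}(A)$ at position $(i+1,j+1)$ and $U-V$ is the contiguous $2$-by-$2$ minor of $D_{k+1}(A)$ at position $(i,j)$.

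The proof is then completed by the elementary algebraic identity $XU-YV=X(U-V)+V(X-Y)$. The hypothesis $A\in TP_{k+3}$ implies $A\in TP_{k+1}$, so by part (1) both $D_{k-1}(A)$ and $D_{k+1}(A)$ are $TP_2$; hence $X-Y>0$ and $U-V>0$, while $X,V>0$ as products of positive minors of $A$ (all of order $\le k+2$). Dividing by $M^{(k)}_{i+1,j+1}>0$ yields $\Delta>0$, and a final invocation of Fekete finishes (2). The main conceptual step is recognizing that two nested Sylvester reductions recast a $3$-by-$3$ minor of $D_k(A)$ as a combination of $2$-by-$2$ minors of $D_{k\pm 1}(A)$, so that part (1) can be bootstrapped; the remaining work is routine bookkeeping and the observation that the algebraic identity $XU-YV=X(U-V)+V(X-Y)$ turns two positive differences into one.
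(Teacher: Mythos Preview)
Your proposal is correct and follows essentially the same approach as the paper: both parts reduce to contiguous minors via Fekete, part (1) uses the Sylvester factorization $M^{(k)}_{i,j}M^{(k)}_{i+1,j+1}-M^{(k)}_{i,j+1}M^{(k)}_{i+1,j}=M^{(k-1)}_{i+1,j+1}M^{(k+1)}_{i,j}$, and part (2) applies Sylvester once more to the $3\times 3$ block of $D_k(A)$ to obtain the expression $XU-YV$ in your notation. The only cosmetic difference is the final inequality step: the paper argues directly that $U>V>0$ and $X>Y>0$ (these are exactly your observations that $D_{k+1}(A)$ and $D_{k-1}(A)$ are $TP_2$, phrased as products of minors) and then multiplies the inequalities, whereas you use the equivalent identity $XU-YV=X(U-V)+V(X-Y)$; both yield $XU-YV>0$ from the same premises.
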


We demonstrate that the previous result can not be improved for others values of $k$ (see Example \ref{eg_for_Thm_B}). However, we prove a much broader result for a class of matrices known as Hankel matrices. For a given sequence of scalars $a_0,a_1,\ldots,a_{2n}$, the $(n+1) $-by-$ (n+1)$ symmetric matrix 
\begin{align*}
    A = (a_{i+j})_{i,j=0}^n = \begin{bmatrix}
        a_0 & a_1 & \ldots & a_n\\
        a_1 & a_2 & \ldots & a_{n+1}\\
        \vdots & \vdots & \ddots & \vdots\\
        a_n & a_{n+1} & \ldots & a_{2n}
    \end{bmatrix}
\end{align*}
is called a \emph{Hankel matrix}. Each Dodgson condensation matrix corresponding to a totally positive Hankel matrix is totally positive. 
\begin{utheorem}\label{main_thm_Hankel}
    Let $A$ be a Hankel matrix based on the sequence of scalars $a_0,a_1,\ldots,a_{2n}$. If $A$ is $TP$, then $D_k(A)$ is a Hankel and $TP$ matrix for all $1 \leq k \leq n$.  
\end{utheorem}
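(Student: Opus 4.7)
The plan is first to observe that $D_k(A)$ inherits the Hankel structure of $A$, and then to deduce total positivity from a Stieltjes moment representation of the sequence $(a_l)$. For the Hankel claim, Lemma~\ref{Dk-entries} gives $M_{i,j}^{(k)} = \det A[\{i,\dots,i+k\},\{j,\dots,j+k\}]$; when $A=(a_{p+q})$ is Hankel the $(s,t)$-entry of this submatrix is $a_{(i+s)+(j+t)} = a_{(i+j)+s+t}$, which depends only on $i+j$. So $M_{i,j}^{(k)}$ depends only on $i+j$, and $D_k(A)$ is a Hankel matrix with entries $c_L := \det(a_{L+s+t})_{s,t=0}^{k}$.

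For total positivity, I would invoke the classical fact that $A$ being $TP$ forces $(a_0,\dots,a_{2n})$ to be a Stieltjes moment sequence. Indeed, both $A=(a_{i+j})_{i,j=0}^{n}$ and the shifted Hankel $(a_{i+j+1})_{i,j=0}^{n-1}$---which is the submatrix $A[\{1,\dots,n\},\{2,\dots,n+1\}]$, all of whose principal minors are minors of $A$ and hence positive---are positive definite. The truncated Stieltjes moment theorem then produces a positive Borel measure $\mu$ on $(0,\infty)$ with at least $n+1$ points of increase such that $a_l = \int_0^{\infty} x^l\, d\mu(x)$ for $0\le l\le 2n$ (see Karlin~\cite{Kar68}).

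Given this representation, Andreief's (Heine's) integral identity yields
\[
c_L \;=\; \frac{1}{(k+1)!}\int_{(0,\infty)^{k+1}} \prod_{0\le s<t\le k}(x_t-x_s)^2\,\prod_{s=0}^{k} x_s^L\, d\mu^{\otimes(k+1)}(x_0,\dots,x_k),
\]
exhibiting $c_L$ as the $L$th moment of the positive pushforward measure $\nu$ on $(0,\infty)$ of $\frac{1}{(k+1)!}\prod_{s<t}(x_t-x_s)^2\, d\mu^{\otimes(k+1)}$ under $(x_0,\dots,x_k)\mapsto \prod_s x_s$; the measure $\nu$ inherits sufficiently many points of increase from $\mu$. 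A second application of Andreief then computes, for any $I=\{i_1<\dots<i_m\}$ and $J=\{j_1<\dots<j_m\}$,
\[
\det D_k(A)[I,J] \;=\; \frac{1}{m!}\int_{(0,\infty)^m} \det(y_p^{i_q})_{p,q=1}^{m}\, \det(y_p^{j_q})_{p,q=1}^{m}\, d\nu^{\otimes m}(y_1,\dots,y_m),
\]
which is strictly positive because $\{y^i\}_{i\ge 0}$ is a Chebyshev system on $(0,\infty)$---so both generalized Vandermondes have the same (nonzero) sign on $\{y_1<\dots<y_m\}$---and $\nu$ has at least $m$ points of increase. Hence $D_k(A)$ is $TP$.

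The main obstacle is cleanly establishing the moment representation in the second step: although the truncated Stieltjes moment theorem is classical, one must also ensure the representing measure can be chosen with support strictly inside $(0,\infty)$, which does hold under our hypotheses but requires some care. A self-contained alternative is to construct $\mu$ explicitly as the discrete measure supported on the roots of an orthogonal polynomial associated with the functional $x^l\mapsto a_l$ (Gauss--Christoffel quadrature), or to avoid measures entirely by proving the statement via iterated Sylvester identities applied to the Hankel minors, although the latter bookkeeping is significantly heavier than the Andreief argument sketched here.
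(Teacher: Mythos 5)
Your proposal is correct in outline, but it takes a genuinely different route from the paper. The paper's proof is short and purely linear-algebraic: it cites the characterization of Pinkus (Theorem \ref{criteria_of_TP_Hankel}) that a Hankel matrix is TP exactly when it and its shift $A'=A[\{1,\ldots,n\},\{2,\ldots,n+1\}]$ are positive definite, observes that $D_k(A)$ is again Hankel with $D_k(A)'=D_k(A')$, and then obtains positive definiteness of both from spectral considerations: $A$ (and likewise $A'$) is symmetric and TP, hence positive definite, so the symmetric compound $C_{k+1}(A)$ has positive eigenvalues (each a product of $k+1$ eigenvalues of $A$), and $D_k(A)$ is a principal submatrix of $C_{k+1}(A)$. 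You instead realize $(a_l)$ as a truncated Stieltjes moment sequence and prove strict total positivity of $D_k(A)$ directly via two applications of the Heine/Andreief identity; this is heavier, but it yields more --- the entries of $D_k(A)$ are exhibited as a Stieltjes moment sequence with an explicit representing measure --- and it avoids invoking the TP characterization of Hankel matrices, whose standard proof is essentially the moment-theoretic content you re-derive. If you pursue your route, two points need tightening. First, the existence of a representing measure supported in $(0,\infty)$ requires either a citation covering the strictly positive definite truncated case or an argument that a possible atom at $0$ is harmless: positive definiteness of the shifted Hankel forces $\mu$ to have at least $n$ points of increase in $(0,\infty)$, hence $\nu$ has at least $n-k$ there, which handles every minor of $D_k(A)$ except its full determinant, and that one has exponent $0$ in both index sets, so a tuple with $y_1=0$ still contributes positively. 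Second, the assertion that $\nu$ ``inherits sufficiently many points of increase'' should be justified, e.g.\ by noting that the products $t_1\cdots t_k\,t_{k+j}$ of distinct points of increase of $\mu$ are distinct. With those details supplied, your argument is a valid alternative proof of Theorem \ref{main_thm_Hankel}.
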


An important goal in the study of totally positive matrices is to find out sufficient conditions to verify the  total positivity of a matrix. A matrix $A = [a_{i,j}] \in M_{n}(\R)$ is called $TP_2(c)$, if there exists a positive number $c$ such that $a_{i,j}a_{i+1,j+1} \geq c a_{i+1,j}a_{i,j+1}$ for all $i,j = 1,\ldots,n$. In 2006, Katkova and Vishnyakova \cite{katkova2006sufficient} proved the following fascinating result, which may be viewed as an improvement of the work of Craven and Csordas \cite{CC98}.
  
\begin{theorem}[Katkova and Vishnyakova \cite{katkova2006sufficient}]
    Let $A \in M_{n}(\R)$ and $c \geq 4\cos^2\left(\frac{\pi}{n+1}\right)$ be a positive number. If $A$ is $TP_2(c)$, then $A$ is $TP$. \label{OA}
\end{theorem}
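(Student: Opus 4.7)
\medskip

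The plan is to reduce, via Fekete's theorem (Theorem \ref{fekete}), to showing that every contiguous minor of $A$ is positive, and then to run an induction on the order $k$ of the minor using the simplest instance of Sylvester's identity (Theorem \ref{thm:sylvester_identity}). Let
\[
d^{(k)}_{i,j} \;:=\; \det A[\{i,\ldots,i+k-1\},\{j,\ldots,j+k-1\}]
\]
denote the $k$-by-$k$ contiguous minor at position $(i,j)$. Writing out the $2$-by-$2$ Sylvester identity applied to a $(k+1)$-by-$(k+1)$ contiguous block (exactly as in the discussion preceding Lemma \ref{Dk-entries}) gives the Dodgson recurrence
\begin{equation*}
d^{(k+1)}_{i,j}\, d^{(k-1)}_{i+1,j+1} \;=\; d^{(k)}_{i,j}\, d^{(k)}_{i+1,j+1} \;-\; d^{(k)}_{i,j+1}\, d^{(k)}_{i+1,j}, \qquad k\geq 1,
\end{equation*}
with the convention $d^{(0)}\equiv 1$. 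Thus, once the lower orders are known to be positive, positivity of $d^{(k+1)}_{i,j}$ is exactly the strict $TP_2$-type inequality for the matrix of $k$-by-$k$ contiguous minors.

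To carry this strict inequality through the induction one needs a quantitative version. The heart of the argument is therefore to show that there exist constants $c_1 = c > c_2 > \cdots > c_{n-1} \geq 1$ such that, at every level $k$ and every valid $(i,j)$,
\begin{equation*}
d^{(k)}_{i,j}\, d^{(k)}_{i+1,j+1} \;\geq\; c_k\, d^{(k)}_{i,j+1}\, d^{(k)}_{i+1,j}.
\end{equation*}
Given the hypothesis at level $k$ (and, in auxiliary form, at level $k-1$), one substitutes the Sylvester recurrence into each of the four $(k+1)$-by-$(k+1)$ minors appearing on both sides of the desired level-$(k+1)$ inequality, expands, and bounds the resulting cross terms by applying the inductive hypothesis to every level-$k$ ratio that appears. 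After cancellation, this yields a single-variable Möbius-type recurrence $c_{k+1}=\varphi(c_k)$ that, after normalization, takes the Chebyshev form $c_{k+1} = 2 - 1/c_k$ (or an equivalent fractional-linear recursion).

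Once the recursion is in hand, the threshold $4\cos^2\!\bigl(\tfrac{\pi}{n+1}\bigr)$ drops out of standard Chebyshev identities: iterating $c \mapsto 2 - 1/c$ from $c_1 = c$ and asking for $c_{n-1}\geq 1$ is equivalent, via the closed form involving $U_{n-k}/U_{n-k+1}$ evaluated at $\cos(\pi/(n+1))$, to $c \geq 4\cos^2(\pi/(n+1))$; this is precisely the value at which the iteration bottoms out at $c_{n-1}=1$, and for any smaller $c$ one could construct a $TP_2(c)$ matrix whose $n$-by-$n$ minor vanishes, confirming sharpness. With $c_k\geq 1$ for every $k\leq n-1$, the Dodgson recurrence produces $d^{(k+1)}_{i,j}>0$ at each step, so every contiguous minor of $A$ is positive and Fekete's theorem upgrades this to $A$ being $TP$.

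The main obstacle is the algebraic reduction in the induction step: the target level-$(k+1)$ inequality, after substituting Sylvester for each of the four minors, becomes a comparison between two polynomial expressions in the eight surrounding level-$k$ minors and the four level-$(k-1)$ minors appearing in the denominators. Bounding this comparison tightly enough to obtain the clean recursion $c_{k+1}=2-1/c_k$ (rather than a weaker bound that would miss the Chebyshev threshold) is the delicate point; one has to use the level-$k$ inequality for every cross-ratio that arises and leverage the fact that the level-$(k-1)$ terms enter symmetrically on both sides and thus cancel. Once this algebraic identity/inequality is established, the remainder of the argument is a straightforward iteration of Chebyshev ratios.
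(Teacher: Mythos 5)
The paper does not actually prove Theorem \ref{OA}; it is quoted from Katkova--Vishnyakova \cite{katkova2006sufficient}, so the comparison here is with that source rather than with any argument in the text. Judged on its own merits, your proposal has the right skeleton (reduce to contiguous minors by Fekete's Theorem \ref{fekete}, iterate the Dodgson/Sylvester recurrence, and control a quantitative $TP_2$-type constant level by level), but the crucial inductive step is only asserted, and the specific invariant and recursion you assert cannot be correct. First, the claimed ``symmetric cancellation'' of the order-$(k-1)$ terms fails: writing the four order-$(k+1)$ minors via Dodgson, the two products you must compare carry the \emph{different} denominators $d^{(k-1)}_{i+1,j+1}d^{(k-1)}_{i+2,j+2}$ and $d^{(k-1)}_{i+1,j+2}d^{(k-1)}_{i+2,j+1}$, and the inductive inequality at level $k-1$ bounds their ratio in the direction that works against you, so no clean cancellation occurs. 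Second, and decisively, the recursion $c_{k+1}=2-1/c_k$ is incompatible with the stated threshold: the map $x\mapsto 2-1/x$ has fixed point $1$, is increasing, and maps $[1,\infty)$ into $[1,2)$, so every initial value $c\geq 1$ yields $c_k\geq 1$ for all $k$. Your criterion ``$c_{n-1}\geq 1$'' would therefore prove that $TP_2(c)$ with any $c\geq 1$ implies $TP$ for every $n$, which is false (the constant $4\cos^2(\pi/(n+1))$ exceeds $1$ for $n\geq 3$ and is sharp); note also that in your map the parameter $c$ enters only as the initial value, so no $n$-dependent threshold above $1$ can possibly emerge from it.

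The actual argument of \cite{katkova2006sufficient} runs a different induction: rather than a $TP_2(c_k)$ statement about the condensed matrices, one proves lower bounds comparing each contiguous minor with its corner entry times the next smaller contiguous minor, of the shape $d^{(k)}_{i,j}\geq \beta_k\, a_{i,j}\, d^{(k-1)}_{i+1,j+1}$ (together with companion estimates controlling the shifted products), where $\beta_1=1$ and the constants obey a continued-fraction recursion of the type $\beta_{k+1}=1-\tfrac{1}{c\,\beta_k}$, in which $c$ enters the iteration at every step. Writing $c=4\cos^2\theta$ one gets $\beta_k=\sin((k+1)\theta)/\bigl(2\cos\theta\,\sin(k\theta)\bigr)$, and the requirement that all the needed $\beta_k$ stay positive is exactly what produces the Chebyshev threshold $c\geq 4\cos^2\bigl(\tfrac{\pi}{n+1}\bigr)$. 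So your plan needs to be repaired at its heart: replace the asserted level-$k$ $TP_2(c_k)$ invariant and the recursion $2-1/c_k$ by an invariant whose recursion genuinely involves $c$, and carry out the algebraic estimate you currently defer, since that estimate is the entire content of the theorem.
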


We desire to determine if we can learn anything about whether a matrix $A$ is totally positive by looking at the total positivity of compounds or Dodgson condensation matrices. Suppose a matrix $A$ is $TP_k$. If $C_{k+1}(A)$ is $TP_1$, then by definition $A$ is $TP_{k+1}$. On the other hand, by Fekete's Theorem \ref{fekete}, if $D_{k}(A)$ is $TP_1$, then $A$ is $TP_{k+1}$. In fact, even more can be said, if $A$ is $TP_1$ and $C_2(A)$ is TP, then it follows that $A$ must also be TP. However, this fact, which may be seen to serve as a possible generalization of the sufficiency-type conditions from Theorem \ref{OA}, is vacuous as can be seen from Corollary \ref{corollary1}. Note that by Corollary \ref{corollary1} if a certain compound matrix is $TP_3$ or more, then $A$ can not be totally positive. A natural question is what we can say if we know that a compound matrix or a Dodgson condensation matrix is $TP_2$. Our final main result asserts an answer to this question.  

\begin{utheorem}\label{main_thm_3}
    Let $A \in M_{n}(\R)$ be $TP_k$ for some $1\leq k \leq n-2$. If $D_{k}(A)$ is $TP_2$, then $A$ is $TP_{k+2}$.
\end{utheorem}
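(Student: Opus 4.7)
The plan is to reach $TP_{k+2}$ by two applications of Fekete's Theorem \ref{fekete}, bridged by Sylvester's determinantal identity.

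First, since $D_k(A)$ is $TP_2$, every entry of $D_k(A)$ is positive. By Lemma \ref{Dk-entries}, these entries are exactly the contiguous $(k+1)\times(k+1)$ minors of $A$. Combined with the hypothesis that $A$ is $TP_k$ (so all contiguous minors of order $\le k$ are positive), Fekete's theorem immediately yields that $A$ is $TP_{k+1}$. A second appeal to Fekete then reduces the claim to showing that every contiguous minor of $A$ of order $k+2$ is positive.

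Fix indices $i,j$ and let $M := A[\{i,\ldots,i+k+1\},\{j,\ldots,j+k+1\}]$. Applying the special case of Theorem \ref{thm:sylvester_identity} spelled out immediately after its statement to $M$ with its central $k\times k$ block $A_{22} := A[\{i+1,\ldots,i+k\},\{j+1,\ldots,j+k\}]$ as the inner submatrix, the four corner $(k+1)\times(k+1)$ determinants that appear are precisely $M^{(k)}_{i,j}$, $M^{(k)}_{i,j+1}$, $M^{(k)}_{i+1,j}$, and $M^{(k)}_{i+1,j+1}$, so that
\[
\det(M)\cdot\det(A_{22}) \;=\; M^{(k)}_{i,j}\, M^{(k)}_{i+1,j+1} \;-\; M^{(k)}_{i,j+1}\, M^{(k)}_{i+1,j}.
\]
The right-hand side is a contiguous $2\times 2$ minor of $D_k(A)$ and is positive since $D_k(A)$ is $TP_2$; the factor $\det(A_{22})$ is a contiguous $k\times k$ minor of $A$ and is positive since $A$ is $TP_k$. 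Hence $\det(M)>0$, completing the verification.

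No substantive obstacle is anticipated: the argument is essentially Sylvester's identity sandwiched between two invocations of Fekete. The only mild care needed is bookkeeping---verifying that the four corner blocks of $M$ produced by the Sylvester partition are indeed the four consecutive entries of $D_k(A)$ sitting in rows $i,i+1$ and columns $j,j+1$, which is immediate from Lemma \ref{Dk-entries}.
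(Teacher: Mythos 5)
Your proof is correct and is essentially the paper's own argument: both reduce to contiguous minors via Fekete's Theorem \ref{fekete} and then apply the special case of Sylvester's identity to the $(k+2)$-order contiguous minor, identifying the numerator (your right-hand side) as a contiguous $2\times 2$ minor of $D_k(A)$ and the central block $\det A[\{i+1,\ldots,i+k\},\{j+1,\ldots,j+k\}] = M^{(k-1)}_{i+1,j+1}$ as a positive $k\times k$ minor. The only cosmetic difference is that you state the identity in product form rather than as a quotient, which changes nothing of substance.
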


Since $D_k(A)$ is a submatrix of $C_{k+1}(A)$ we obtain an immediate consequence of Theorem \ref{main_thm_3}. 

\begin{corollary}\label{corollary2}
    Let $A \in M_{n}(\R)$ be $TP_k$ for some $1\leq k \leq n-2$. If $C_{k+1}(A)$ is $TP_2$, then $A$ is $TP_{k+2}$.
\end{corollary}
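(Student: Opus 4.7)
The plan is to use Fekete's theorem (Theorem \ref{fekete}) to reduce verification of $TP_{k+2}$ to checking that all contiguous minors of $A$ of orders $1,2,\ldots,k+2$ are positive, and then handle the three regimes (order $\le k$, order $k+1$, order $k+2$) separately.

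The first two regimes are essentially free. Contiguous minors of order at most $k$ are positive because $A$ is $TP_k$ by hypothesis. For order $k+1$, by Lemma \ref{Dk-entries} each contiguous $(k+1)\times(k+1)$ minor of $A$ is exactly an entry $M^{(k)}_{i,j}$ of $D_k(A)$, and those entries are positive since $D_k(A)$ is $TP_2$ (in particular $TP_1$).

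The key step is the order $k+2$ case. Fix indices and consider the contiguous submatrix
\[
A[\{i,\ldots,i+k+1\},\{j,\ldots,j+k+1\}],
\]
and apply the $2\times 2$ block form of Sylvester's identity (the special case displayed after Theorem \ref{thm:sylvester_identity}) with inner block
\[
A_{22}=A[\{i+1,\ldots,i+k\},\{j+1,\ldots,j+k\}],
\]
which is a $k\times k$ submatrix. Then the matrices $B,C,D,E$ produced by Sylvester's construction are precisely the four contiguous $(k+1)\times(k+1)$ submatrices whose determinants equal $M^{(k)}_{i,j}$, $M^{(k)}_{i,j+1}$, $M^{(k)}_{i+1,j}$, $M^{(k)}_{i+1,j+1}$. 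Sylvester's identity then gives
\[
\det A[\{i,\ldots,i+k+1\},\{j,\ldots,j+k+1\}]\cdot \det A_{22}
= M^{(k)}_{i,j}M^{(k)}_{i+1,j+1}-M^{(k)}_{i,j+1}M^{(k)}_{i+1,j}.
\]
The right-hand side is a $2\times 2$ contiguous minor of $D_k(A)$, hence positive because $D_k(A)$ is $TP_2$; and $\det A_{22}>0$ because $A$ is $TP_k$. Dividing yields positivity of the order-$(k+2)$ contiguous minor.

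There is no serious obstacle here; the only thing to be careful about is making sure the Sylvester partition lines up so that $B,C,D,E$ are exactly the four neighboring contiguous $(k+1)\times(k+1)$ windows indexed by $(i,j),(i,j+1),(i+1,j),(i+1,j+1)$, so that the right-hand side is literally a $2\times 2$ contiguous minor of $D_k(A)$ rather than some other $2\times 2$ minor. Once this bookkeeping is in place, combining the three regimes with Fekete's theorem immediately delivers $TP_{k+2}$.
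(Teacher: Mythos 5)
Your argument is correct and is in substance the paper's own: the paper disposes of this corollary in one line by observing that $D_k(A)$ is a submatrix of $C_{k+1}(A)$ and then invoking Theorem \ref{main_thm_3}, and your Sylvester-plus-Fekete computation (condensing the $(k+2)$-order contiguous minor over the central $k\times k$ block $A_{22}$, whose determinant is positive since $A$ is $TP_k$) is precisely the paper's proof of that theorem, inlined. The only step you leave tacit is exactly that one-line observation: your hypothesis is on $C_{k+1}(A)$, not on $D_k(A)$, so you should state that the entries and the $2\times 2$ contiguous minors of $D_k(A)$ are themselves minors of $C_{k+1}(A)$ (its rows and columns indexed by the contiguous windows $\{i,\ldots,i+k\}$ and $\{j,\ldots,j+k\}$, which appear in lexicographic order), which is what makes them positive under the $TP_2$ assumption on the compound and licenses your repeated use of ``$D_k(A)$ is $TP_2$''.
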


The rest of the paper is organized as follows. In Section \ref{sec:prelim}, we provide some preliminary tools needed for the proof of Theorem \ref{main_thm_1}. In Section \ref{sec:proofs_and_remarks}, we 
provide proofs of our four main theorems, together with some remarks and examples.

\section{Preliminaries}\label{sec:prelim}
We now consider an important and very useful correspondence between totally nonnegative matrices and a corresponding bidiagonal factorization. Recall that a {\em bidiagonal factorization} of a matrix $A$, is simply a decomposition of $A$ into a product of bidiagonal matrices. A matrix $B=[b_{i,j}]$ is called {\em lower (upper) bidiagonal} if $b_{i,j}=0$, whenever $i<j$ and $i > j-1$ ($i>j$ and $i < j-1$). We assume throughout that all matrices are square of order $n$. Let $I$ denote the identity matrix. For $1\leq i,j \leq n$, we let $E_{ij}$  being the elementary standard basis matrix whose only nonzero entry is a $1$ in the $(i,j)$ position. Let us define $L_i(\ell) := I + \ell E_{i,i-1}$ and $U_j(u) := I+uE_{j-1,j}$ where $2\leq i,j \leq n$ and $\ell,u\in \R$. If a matrix is of the form $L_i(\ell)$ or $U_j(u)$, then it is called an \emph{elementary bidiagonal matrix}. We may shorten $L_{i}(\ell)$ to $L_{i}$ or $U_j(u)$ to $U_j$ when the choice of numbers $\ell,u \in \R$ are not so important. 

The bidiagonal factorization associated with totally nonnegative matrices has had a long and productive history, for example, see \cite{Cry73, Cry76, Fal01, GP94, Loe55, Whi52}. 

\begin{theorem}\label{thm:bidiag} \cite{GP92, Fal01}
Let $A$ be an $n $-by-$ n$ nonsingular totally nonnegative matrix.
Then $A$ can be written as
\begin{align}\label{bidiag}
 A &= (L_2(\ell_1)) \cdot (L_{3}(\ell_{2}) L_2(\ell_{3})) \cdots
 \cdot(L_n(\ell_{k-n+1}) \cdots L_2(\ell_{k}))\cdot \nonumber\\ 
 &\ \ \ D\cdot (U_2(u_{k})\cdots U_{n-1}(u_{k-n+2})U_n(u_{k-n+1}))\cdots (U_{2}(u_3)U_3(u_2)) \cdot U_2(u_1),
\end{align}
where $k=\binom{n}{2}$; $\ell_{i},u_{j} \geq 0$ for all $i,j \in
\{1,2,\ldots, k\}$; and $D = \text{diag}(d_1,d_2,\ldots,d_n)$ is a diagonal matrix with $d_i > 0$.
\label{bifact}
\end{theorem}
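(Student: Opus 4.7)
The plan is to establish the factorization via \emph{Neville elimination}, a Gaussian-style elimination that uses only \emph{adjacent} row and column operations. I would first handle the totally positive case, where all the relevant pivots are guaranteed to be positive, and then extend to the general nonsingular totally nonnegative case by a density argument.

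First, suppose $A$ is $TP$. I would perform Neville elimination on $A$ from the left, column by column: for $j=1,2,\ldots,n-1$, annihilate the subdiagonal entries of column $j$ from the bottom up, each time by subtracting a multiple of the row immediately above. Concretely, to kill the $(i,j)$-entry (with $i>j$) I left-multiply by $L_i(-m_{i,j})^{-1}$ for an appropriate multiplier $m_{i,j}\ge 0$. The key classical computation (due to Gasca--Pe\~na) identifies $m_{i,j}$ as a ratio of minors of the \emph{original} matrix $A$, namely
\[
m_{i,j} \;=\; \frac{\det A[\{i-j+1,\ldots,i\},\{1,\ldots,j\}]}{\det A[\{i-j,\ldots,i-1\},\{1,\ldots,j\}]},
\]
which is positive because the numerator and denominator are minors of the $TP$ matrix $A$. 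After this sweep, $A$ is reduced to an upper triangular matrix $U'$. Performing the symmetric procedure on $U'$ from the right (using the elementary factors $U_j(u)$) reduces $U'$ to a diagonal matrix $D=\mathrm{diag}(d_1,\ldots,d_n)$, and $d_1\cdots d_k=\det A[\{1,\ldots,k\}]>0$ yields $d_i>0$. Inverting these elementary operations (noting $L_i(\ell)^{-1}=L_i(-\ell)$ and similarly for $U_j$) and grouping the factors according to which column / row is being cleared produces exactly the block structure in (\ref{bidiag}): block $s$ on the left is $L_{s+1}(\cdot)L_s(\cdot)\cdots L_2(\cdot)$, coming from the elimination of the $(n-s)$-th column, and analogously for the right factors. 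All parameters are positive in the $TP$ case.

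For the general case where $A$ is merely nonsingular $TN$, I would use perturbation. Choose any fixed $TP$ matrix $M$ (for instance one built from Theorem~\ref{thm:bidiag} applied to a concrete example, or any polynomial Vandermonde/Pascal-type matrix known to be $TP$) and set $A^{(m)}=A+\tfrac{1}{m}M$. One checks that $A^{(m)}$ is $TP$ for all sufficiently large $m$, so each $A^{(m)}$ admits the factorization from the previous paragraph with strictly positive parameters $\ell_i^{(m)}, u_j^{(m)}, d_i^{(m)}$. The Gasca--Pe\~na formula expresses every parameter as a continuous (rational) function of the minors of $A^{(m)}$ whose denominator is a leading minor that remains bounded away from zero in the limit (because $\det A\ne 0$ forces the relevant contiguous leading minors of $A$ to be positive by nonsingular $TN$ theory). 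Passing to the limit $m\to\infty$ gives $\ell_i^{(m)}\to\ell_i\ge 0$, $u_j^{(m)}\to u_j\ge 0$, $d_i^{(m)}\to d_i>0$, and continuity of matrix multiplication preserves the identity, yielding the desired factorization for $A$.

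The main technical obstacle is the Gasca--Pe\~na identification of each Neville multiplier with a ratio of minors of the original matrix $A$: one must track how each adjacent row operation transforms every relevant minor, using that a factor of the form $I+cE_{i,i-1}$ changes only those minors whose row index set contains $i$ but not $i-1$, and changes them in a controllable additive way. This bookkeeping is what guarantees simultaneously that (a) all multipliers are nonnegative, (b) the ordering of factors in (\ref{bidiag}) is forced by the order of elimination, and (c) the denominators that appear in the limiting argument are precisely leading contiguous minors of $A$, which remain strictly positive under the nonsingular $TN$ hypothesis.
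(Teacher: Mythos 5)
The paper does not prove this theorem itself (it cites Gasca--Pe\~na and Fallat), and your treatment of the $TP$ case is indeed the standard Neville-elimination argument from those sources. The genuine problems are in your extension to general nonsingular $TN$ matrices. First, the additive approximation step fails outright: it is not true that $A^{(m)}=A+\tfrac{1}{m}M$ is $TP$ for large $m$ when $A$ is nonsingular $TN$ and $M$ is $TP$, because minors are not additive and a vanishing minor of $A$ can be pushed negative. Take $A=I$: the minor of $I+tM$ on rows $\{1,2\}$ and columns $\{2,3\}$ equals $t^2m_{12}m_{23}-tm_{13}(1+tm_{22})=-tm_{13}-t^2\det M[\{1,2\},\{2,3\}]<0$ for every $t>0$ and every $TP$ matrix $M$. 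So no additive perturbation of the identity by a $TP$ matrix is ever $TP$. The correct density statement (Whitney's theorem) is proved multiplicatively, e.g.\ by forming $S_\epsilon A S_\epsilon$ with $S_\epsilon$ a $TP$ smoothing (Polya-type) matrix and invoking Cauchy--Binet together with nonsingularity of $A$.

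Second, even with a correct $TP$ approximating sequence, your passage to the limit in the parameters is unjustified. The Neville multipliers are ratios such as $\det A[\{i-j+1,\ldots,i\},\{1,\ldots,j\}]/\det A[\{i-j,\ldots,i-1\},\{1,\ldots,j\}]$; the denominators are contiguous but \emph{not} leading principal minors, and for a nonsingular $TN$ limit they can vanish (again $A=I$: $\det I[\{2\},\{1\}]=0$), so the claim that they ``remain bounded away from zero by nonsingular $TN$ theory'' is false, and boundedness or convergence of the $\ell_i^{(m)},u_j^{(m)}$ does not follow. This is precisely the point the cited proofs are built to handle: they run Neville elimination directly on the nonsingular $TN$ matrix, using the shadow/zero-pattern structure of $TN$ matrices (if a Neville pivot vanishes, all entries below it in that column vanish, so the multiplier may be taken to be $0$) and the fact that the intermediate matrices remain $TN$, which yields nonnegative parameters without any limiting argument; positivity of the $d_i$ then comes from positivity of the leading principal minors of a nonsingular $TN$ matrix. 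A smaller bookkeeping issue: eliminating columns left to right, bottom-up, produces the grouping $(L_n\cdots L_2)(L_n\cdots L_3)\cdots(L_n)$, which is a different reduced ordering from the grouping $(L_2)(L_3L_2)\cdots(L_n\cdots L_2)$ appearing in (\ref{bidiag}); to land exactly on the stated form you need either a different elimination order or an argument converting one ordering into the other.
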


\begin{figure}[h]
\centering
\begin{tikzpicture}
\filldraw [black] (0,0) circle (3pt) node [left] {$\ 1\ $};
\filldraw [black] (12,0) circle (3pt) node [right] {$\ 1\ $};
\filldraw [black] (0,1) circle (3pt) node [left] {$\ 2\ $};
\filldraw [black] (12,1) circle (3pt) node [right] {$\ 2\ $};
\filldraw [black] (0,2) circle (3pt) node [left] {$\ 3\ $};
\filldraw [black] (12,2) circle (3pt) node [right] {$\ 3\ $};
\filldraw [black] (0,5) circle (3pt) node [left] {$\ n-2\ $};
\filldraw [black] (12,5) circle (3pt)node [right] {$\ n-2\ $};
\filldraw [black] (0,6) circle (3pt) node [left] {$\ n-1\ $};
\filldraw [black] (12,6) circle (3pt) node [right] {$\ n-1\ $};
\filldraw [black] (0,7) circle (3pt) node [left] {$\ n\ $};
\filldraw [black] (12,7) circle (3pt) node [right] {$\ n\ $};
\filldraw [black] (6,0.1) node [above] {$d_1$};
\filldraw [black] (6,1.1) node [above] {$d_2$};
\filldraw [black] (6,2.1) node [above] {$d_3$};
\filldraw [black] (6,5.1) node [above] {$d_{n-2}$};
\filldraw [black] (6,6.1) node [above] {$d_{n-1}$};
\filldraw [black] (6,7.1) node [above] {$d_n$};
\filldraw [black] (1.2,0.5) node [left] {$\ell_1$};
\filldraw [black] (10.8,0.5) node [right] {$u_1$};
\filldraw [black] (1.65,1.5) node [left] {$\ell_2$};
\filldraw [black] (10.3,1.5) node [right] {$u_2$};
\filldraw [black] (2.15,0.5) node [left] {$\ell_3$};
\filldraw [black] (9.8,0.5) node [right] {$u_3$};
\filldraw [black] (1.65,5.5) node [left] {$\ell_{k-2n+2}$};
\filldraw [black] (10.3,5.5) node [right] {$u_{k-2n+2}$};
\filldraw [black] (3.7,1.5) node [left] {$\ell_{k-n-1}$};
\filldraw [black] (8.22,1.5) node [right] {$u_{k-n-1}$};
\filldraw [black] (4.2,0.5) node [left] {$\ell_{k-n}$};
\filldraw [black] (7.82,0.5) node [right] {$u_{k-n}$};
\filldraw [black] (2.3,6.5) node [right] {$\ell_{k-n+1}$};
\filldraw [black] (9.65,6.5) node [left] {$u_{k-n+1}$};
\filldraw [black] (2.8,5.5) node [right] {$\ell_{k-n+2}$};
\filldraw [black] (9.15,5.5) node [left] {$u_{k-n+2}$};
\filldraw [black] (4.75,1.5) node [left] {$\ell_{k-1}$};
\filldraw [black] (7.2,1.5) node [right] {$u_{k-1}$};
\filldraw [black] (5.15,0.5) node [left] {$\ell_{k}$};
\filldraw [black] (6.8,0.5) node [right] {$u_{k}$};
\draw [line width=0.5mm,dotted] (0,2.7) -- (0,4.5);
\draw [line width=0.5mm,dotted] (12,2.7) -- (12,4.5);
\draw [line width=0.5mm,dotted] (6,2.7) -- (6,4.5);
\draw [line width=0.5mm,dotted] (2.5,0.5) -- (2.8,0.5);
\draw [line width=0.5mm,dotted] (2,1.5) -- (2.3,1.5);
\draw [line width=0.5mm,dotted] (9.2,0.5) -- (9.5,0.5);
\draw [line width=0.5mm,dotted] (9.7,1.5) -- (10,1.5);
\draw [line width=0.5mm,black]  (0,0) -- (12,0);
\draw [line width=0.5mm,black]  (0,1) -- (12,1);
\draw [line width=0.5mm,black]  (0,2) -- (12,2);
\draw [line width=0.5mm,black]  (0,5) -- (12,5);
\draw [line width=0.5mm,black]  (0,6) -- (12,6);
\draw [line width=0.5mm,black]  (0,7) -- (12,7);
\draw [line width=0.5mm,black]  (2,7) -- (3,5);
\draw [line width=0.5mm,black,dotted]  (3.2,4.5) -- (4.1,2.7);
\draw [line width=0.5mm,black]  (4.5,2) -- (5.5,0);
\draw [line width=0.5mm,black]  (1.5,6) -- (2,5);
\draw [line width=0.5mm,black,dotted]  (2.2,4.5) -- (3.1,2.7);
\draw [line width=0.5mm,black]  (3.5,2) -- (4.5,0);
\draw [line width=0.5mm,black]  (1.5,2) -- (2.5,0);
\draw [line width=0.5mm,black]  (1,1) -- (1.5,0);
\draw [line width=0.5mm,black]  (10,7) -- (9,5);
\draw [line width=0.5mm,black,dotted]  (8.8,4.5) -- (7.9,2.7);
\draw [line width=0.5mm,black]  (7.5,2) -- (6.5,0);
\draw [line width=0.5mm,black]  (10.5,6) -- (10,5);
\draw [line width=0.5mm,black,dotted]  (9.8,4.5) --(8.9,2.7);
\draw [line width=0.5mm,black]  (8.5,2) -- (7.5,0);
\draw [line width=0.5mm,black]  (10.5,2) -- (9.5,0);
\draw [line width=0.5mm,black]  (11,1) -- (10.5,0);
\draw [-stealth,line width=1mm] (0.6,0) -- (0.8,0);
\draw [-stealth,line width=1mm] (0.6,1) -- (0.8,1);
\draw [-stealth,line width=1mm] (0.6,2) -- (0.8,2);
\draw [-stealth,line width=1mm] (0.6,5) -- (0.8,5);
\draw [-stealth,line width=1mm] (0.6,6) -- (0.8,6);
\draw [-stealth,line width=1mm] (0.6,7) -- (0.8,7);
\draw [-stealth,line width=1mm] (5.95,0) -- (6.15,0);
\draw [-stealth,line width=1mm] (5.95,1) -- (6.15,1);
\draw [-stealth,line width=1mm] (5.95,2) -- (6.15,2);
\draw [-stealth,line width=1mm] (5.95,5) -- (6.15,5);
\draw [-stealth,line width=1mm] (5.95,6) -- (6.15,6);
\draw [-stealth,line width=1mm] (5.95,7) -- (6.15,7);
\draw [-stealth,line width=1mm] (11.4,0) -- (11.6,0);
\draw [-stealth,line width=1mm] (11.4,1) -- (11.6,1);
\draw [-stealth,line width=1mm] (11.4,2) -- (11.6,2);
\draw [-stealth,line width=1mm] (11.4,5) -- (11.6,5);
\draw [-stealth,line width=1mm] (11.4,6) -- (11.6,6);
\draw [-stealth,line width=1mm] (11.4,7) -- (11.6,7);
\draw [-stealth,line width=1mm] (1.25,0.5) -- (1.36,0.3);
\draw [-stealth,line width=1mm] (2.25,0.5) -- (2.36,0.3);
\draw [-stealth,line width=1mm] (4.25,0.5) -- (4.36,0.3);
\draw [-stealth,line width=1mm] (5.25,0.5) -- (5.36,0.3);
\draw [-stealth,line width=1mm] (1.735,1.5) -- (1.855,1.3);
\draw [-stealth,line width=1mm] (3.735,1.5) -- (3.855,1.3);
\draw [-stealth,line width=1mm] (4.735,1.5) -- (4.855,1.3);
\draw [-stealth,line width=1mm] (1.735,5.5) -- (1.855,5.3);
\draw [-stealth,line width=1mm] (2.735,5.5) -- (2.855,5.3);
\draw [-stealth,line width=1mm] (2.25,6.5) -- (2.36,6.3);
\draw [-stealth,line width=1mm] (6.7,0.4) -- (6.77,0.53);
\draw [-stealth,line width=1mm] (7.7,0.4) -- (7.77,0.53);
\draw [-stealth,line width=1mm] (9.7,0.4) -- (9.77,0.53);
\draw [-stealth,line width=1mm] (10.7,0.4) -- (10.77,0.53);
\draw [-stealth,line width=1mm] (7.2,1.4) -- (7.27,1.53);
\draw [-stealth,line width=1mm] (8.2,1.4) -- (8.27,1.53);
\draw [-stealth,line width=1mm] (10.2,1.4) -- (10.27,1.53);
\draw [-stealth,line width=1mm] (9.2,5.4) -- (9.27,5.53);
\draw [-stealth,line width=1mm] (10.2,5.4) -- (10.27,5.53);
\draw [-stealth,line width=1mm] (9.7,6.4) -- (9.77,6.53);
\end{tikzpicture}
\caption{Planar network corresponding to the bidiagonal factorization  in Theorem \ref{thm:bidiag}}
\label{fig:enter-label}
\end{figure}
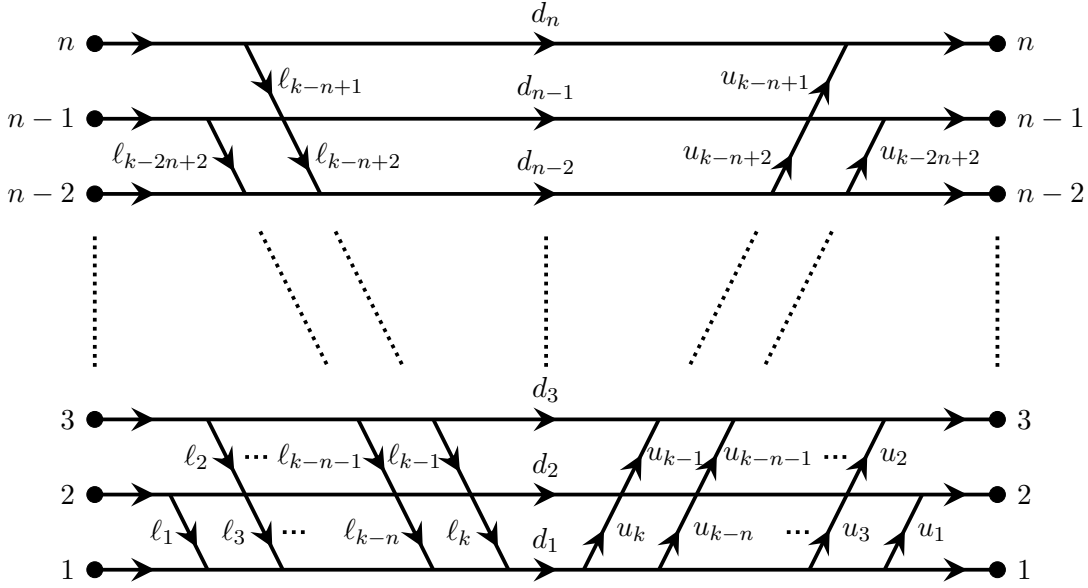

One of the tools used in \cite{Fal01, FZ00} is a representation of a bidiagonal factorization (see also \cite{Fal01,GP92}) in terms of  planar networks, which is well known (see e.g. \cite{Bre95, Fal01}). Recall that a {\em planar network} is simply a weighted directed graph without cycles. The planar networks of interest and corresponding to the factorization in (\ref{bidiag}) is represented as in Figure \ref{fig:enter-label}, and contain $n$ sources (labelled vertices on the left) and $n$ sinks (labelled vertices on the right). Note that each horizontal edge in this planar network has a weight of 1, except for the horizontal edges corresponding to the diagonal factor $D$. 

Now, given a planar network $\Gamma$ such as the one in Figure \ref{fig:enter-label} we have  following important and tremendously useful
fact, which is sometimes also referred to as Lindstrom's Lemma (see \cite{FZ00}).
For index sets $\alpha=\{i_{1}, i_{2}, \ldots, i_{t}\}$ and
$\beta=\{j_{1},j_{2},\ldots,j_{t}\}$, consider a collection
$P(\alpha,\beta)$ of all families of vertex-disjoint paths joining
the vertices $\{i_{1}, i_{2}, \ldots, i_{t}\}$ on the left of the
network $\Gamma$ with the vertices $\{j_{1},j_{2},\ldots,j_{t}\}$ on
the right. In particular, it follows that $i_r$ must be joined to $j_r$ by a path for each $r=1,2,\ldots, t$. For $\pi\in P(\alpha,\beta)$, let $w(\pi)$ be the
product of all the weights assigned to edges that form a family
$\pi$. Then
$$\displaystyle {
\det A[\alpha,\beta]=\sum_{\pi\in P(\alpha,\beta)} w(\pi)\ . }
$$

Theorem \ref{bifact} implies that every invertible TN matrix can be represented by a weighted planar network. More general weighted planar networks can also be associated to TN matrices (see e.g. \cite{Bre95, Fal01}). Often we may interchange the notions of bidiagonal factorizations and planar networks when dealing with various properties of 
TN matrices. In particular, we will often think of a TN matrix by merely representing it as a general planar network. Given such a correspondence, we can recast Theorem \ref{bifact} and other related results
in terms of planar networks. 

\begin{theorem}
If $\Gamma$ is a planar network corresponding to (\ref{bidiag}), then:
\begin{enumerate}
\item The associated matrix is nonsingular and totally
nonnegative if and only if all the parameters are nonnegative and all $d_{i}>0$;
 
\item The associated matrix is totally positive if and only if all the 
parameters are positive and all $d_{i}>0$;
\end{enumerate}\label{networkcase}
\end{theorem}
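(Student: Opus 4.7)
The plan is to derive both parts as consequences of Lindström's Lemma (the path-family formula stated just above the theorem) together with the bidiagonal factorization of Theorem \ref{thm:bidiag}. Lindström will handle the ``if'' directions, and Theorem \ref{thm:bidiag} together with uniqueness of the bidiagonal parameters will handle the ``only if'' directions.

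For the ``if'' direction of part (1), assume all $\ell_i, u_j \geq 0$ and all $d_i > 0$. Then every edge weight in $\Gamma$ is nonnegative, so for each pair $(\alpha,\beta)$ with $|\alpha|=|\beta|$,
$$\det A[\alpha,\beta] = \sum_{\pi \in P(\alpha,\beta)} w(\pi) \geq 0,$$
which gives that $A$ is $TN$. For nonsingularity, observe that when $\alpha=\beta=\{1,\ldots,n\}$, the family consisting of the $n$ horizontal straight-across paths is vertex-disjoint and contributes weight $\prod_{i=1}^n d_i > 0$; since all other terms in the sum are nonnegative, $\det A > 0$. For the ``if'' direction of part (2), assuming in addition that all parameters are strictly positive, each $w(\pi) > 0$, so every minor is positive \emph{provided} $P(\alpha,\beta) \neq \emptyset$. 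The key combinatorial step is thus to verify that the specific planar network of Figure \ref{fig:enter-label} admits at least one family of vertex-disjoint source-to-sink paths for every pair of index sets $\alpha,\beta$ of the same cardinality. I would do this by explicitly routing each source $i_r \in \alpha$ to the sink $j_r \in \beta$ along a monotone staircase that uses diagonal edges to shift levels and horizontal edges to translate across the network, exploiting the abundance of diagonals in the middle strip of $\Gamma$ to avoid collisions between the paths.

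For the ``only if'' direction, suppose first that $A$ is nonsingular and $TN$. Theorem \ref{thm:bidiag} supplies a factorization of the form (\ref{bidiag}) with nonnegative parameters and positive $d_i$; if additionally $A$ is $TP$, the standard Neville-elimination formulas express each $\ell_i$, $u_j$, and $d_i$ as a ratio of contiguous minors of $A$, all of which are then positive, forcing the parameters to be strictly positive. Because these same ratio formulas pin the parameters down uniquely from $A$, the parameters appearing in the given $\Gamma$ must coincide with those produced by Theorem \ref{thm:bidiag} and hence inherit the required signs. The principal obstacle I expect is the combinatorial existence step in the ``if'' direction of (2): producing vertex-disjoint path families for arbitrary $(\alpha,\beta)$ in the network of Figure \ref{fig:enter-label}, which is geometrically clear from the picture but requires a clean constructive or inductive argument to justify rigorously.
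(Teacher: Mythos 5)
The paper itself offers no proof of Theorem \ref{networkcase}: it is stated as a recasting of Theorem \ref{thm:bidiag} into planar-network language and is quoted from the literature (\cite{Bre95, Fal01, GP92, FZ00}), so there is no internal argument to compare yours against. Your outline is the standard one from those sources (Lindstr\"om's lemma for the ``if'' directions, bidiagonal factorization/Neville elimination for the ``only if'' directions), and in that sense the plan is sound. The genuine gap is exactly the step you flag but do not carry out: showing $P(\alpha,\beta)\neq\emptyset$ for every pair of equal-cardinality index sets when all parameters are positive. That is the entire content of the total-positivity direction, and for arbitrary (non-contiguous) $\alpha,\beta$ the ``monotone staircase'' routing is not a one-line observation. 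The clean way to close it is to combine Lindstr\"om with Fekete's criterion (Theorem \ref{fekete}): it suffices to produce a vertex-disjoint family for \emph{contiguous} $\alpha=\{i,\dots,i+t-1\}$, $\beta=\{j,\dots,j+t-1\}$, where the nested staircase paths (source $i+r$ descends, respectively ascends, through the diagonal edges of the $L$-part and $U$-part to reach sink $j+r$) are easy to exhibit and are automatically disjoint by nestedness.

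A second caution concerns the ``only if'' directions. If the statement is read as being about a \emph{fixed} network with arbitrary real parameters, the converse in (1) is false: for $n=3$ take $\ell_1=5$, $\ell_2=0$, $\ell_3=-3$, all $u_j=0$ and $D=I$ in (\ref{bidiag}); the product is $I+2E_{2,1}$, a nonsingular $TN$ matrix coming from a network with a negative weight. So the theorem must be read existentially, namely ``$A$ is nonsingular $TN$ (resp.\ $TP$) if and only if it admits a representation of the form (\ref{bidiag}) with nonnegative (resp.\ positive) parameters and $d_i>0$.'' Under that reading the only-if half of (1) \emph{is} Theorem \ref{thm:bidiag} and needs nothing further, as you essentially note; and for (2) your appeal to uniqueness is unnecessary: it is enough that Neville elimination produces some factorization whose parameters are ratios of products of minors of the $TP$ matrix $A$, hence positive. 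The uniqueness assertion you invoke (that the parameters of any real-parameter network representing a $TP$ matrix must coincide with the Neville ones) is true on the $TP$ locus but is itself a nontrivial injectivity statement -- and the $TN$ example above shows it fails once parameters are allowed to vanish -- so it should not be used without proof.
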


\section{Proof of the main results and related comments}\label{sec:proofs_and_remarks}
We start this section by proving Theorem \ref{main_thm_1}. 
\begin{proof}[Proof of Theorem \ref{main_thm_1}] We first prove the statement for $k = n-2$. So assume $A$ is $TP$, and we want to prove that $C_{n-2}(A)$ is not $TP_3$. Consider the following sets each of size $n-2$:
\begin{align*}
S_1 &= \{1,2,\ldots,n-4,n-3,n-2\},\\
S_2 &= \{1,2,\ldots,n-4,n-3,n-1\},\\
S_3 &= \{1,2,\ldots,n-4,n-3,n\},\\
S_4 &= \{1,2,\ldots,n-4,n-2,n-1\}.    
\end{align*}
Let us define an $4$-by-$ 4$ matrix $S:= (s_{i,j})$ where $s_{i,j} := \det(A[S_i,S_j])$ for $1\leq i,j\leq 4$. Notice that $S$ is the leading principal submatrix of $C_{n-2}(A)$. Now, assume that $A$ is represented as a planar network according to Theorem \ref{networkcase}. Given the nature of the sets $S_i$ ($i=1,2,3,4$), so when computing the entries of $S$, it is sufficient to just label the top-part of the planar network representing $A$ (see Figure \ref{PN-a}). For the purpose of brevity in the computation below, we assume that the diagonal factor in (\ref{bidiag}) is the identity. The strict negativity or positivity of the minors in the matrix $S$ identified below are not affected by introducing an arbitrary positive diagonal factor.

\begin{figure}[h]
\centering
\begin{tikzpicture}
\filldraw [black] (-0.5,1) circle (3pt) node [left] {$\ 1\ $};
\filldraw [black] (12.5,1) circle (3pt) node [right] {$\ 1\ $};
\filldraw [black] (-0.5,2) circle (3pt) node [left] {$\ 2\ $};
\filldraw [black] (12.5,2) circle (3pt) node [right] {$\ 2\ $};
\filldraw [black] (-0.5,5) circle (3pt) node [left] {$\ n-3\ $};
\filldraw [black] (12.5,5) circle (3pt)node [right] {$\ n-3\ $};
\filldraw [black] (-0.5,6) circle (3pt) node [left] {$\ n-2\ $};
\filldraw [black] (12.5,6) circle (3pt) node [right] {$\ n-2\ $};
\filldraw [black] (-0.5,7) circle (3pt) node [left] {$\ n-1\ $};
\filldraw [black] (12.5,7) circle (3pt) node [right] {$\ n-1\ $};
\filldraw [black] (-0.5,8) circle (3pt) node [left] {$\ n\ $};
\filldraw [black] (12.5,8) circle (3pt) node [right] {$\ n\ $};
\filldraw [black] (0.8,5.5) node [right] {$a$};
\filldraw [black] (1.8,5.5) node [right] {$c$};
\filldraw [black] (11,5.5) node [left] {$\ell$};
\filldraw [black] (10,5.5) node [left] {$j$};
\filldraw [black] (1.8,7.5) node [right] {$d$};
\filldraw [black] (1.3,6.5) node [right] {$b$};
\filldraw [black] (2.3,6.5) node [right] {$e$};
\filldraw [black] (9.5,6.5) node [left] {$h$};
\filldraw [black] (10.5,6.5) node [left] {$k$};
\filldraw [black] (10,7.5) node [left] {$i$};
\filldraw [black] (2.8,5.5) node [right] {$f$};
\filldraw [black] (9.1,5.5) node [left] {$g$};
\draw [line width=0.5mm,dotted] (-0.5,2.7) -- (-0.5,4.5);
\draw [line width=0.5mm,dotted] (12.5,2.7) -- (12.5,4.5);
\draw [line width=0.5mm,dotted] (6,2.7) -- (6,4.5);
\draw [line width=0.5mm,dotted] (1.45,1.5) -- (2,1.5);
\draw [line width=0.5mm,dotted] (10.05,1.5) -- (10.6,1.5);
\draw [line width=0.5mm,black]  (-0.5,1) -- (12.5,1);
\draw [line width=0.5mm,black]  (-0.5,2) -- (12.5,2);
\draw [line width=0.5mm,black]  (-0.5,5) -- (12.5,5);
\draw [line width=0.5mm,black]  (-0.5,6) -- (12.5,6);
\draw [line width=0.5mm,black]  (-0.5,7) -- (12.5,7);
\draw [line width=0.5mm,black]  (-0.5,8) -- (12.5,8);
\draw [line width=0.5mm,black]  (1.5,8) -- (2,7);
\draw [line width=0.5mm,black]  (2,7) -- (3,5);
\draw [line width=0.5mm,black,dotted]  (3.2,4.5) -- (4.1,2.7);
\draw [line width=0.5mm,black]  (4.5,2) -- (5,1);
\draw [line width=0.5mm,black]  (1,7) -- (1.5,6);
\draw [line width=0.5mm,black]  (1.5,6) -- (2,5);
\draw [line width=0.5mm,black,dotted]  (2.2,4.5) -- (3.1,2.7);
\draw [line width=0.5mm,black]  (3.5,2) -- (4,1);
\draw [line width=0.5mm,black]  (0.5,6) -- (1,5);
\draw [line width=0.5mm,black,dotted]  (1.2,4.5) -- (2.1,2.7);
\draw [line width=0.5mm,black]  (2.5,2) -- (3,1);
\draw [line width=0.5mm,black]  (0.5,2) -- (1,1);
\draw [line width=0.5mm,black]  (10,7) -- (10.5,8);
\draw [line width=0.5mm,black]  (10,7) -- (9,5);
\draw [line width=0.5mm,black,dotted]  (8.8,4.5) -- (7.9,2.7);
\draw [line width=0.5mm,black]  (7.5,2) -- (7,1);
\draw [line width=0.5mm,black]  (10.5,6) -- (11,7);
\draw [line width=0.5mm,black]  (10.5,6) -- (10,5);
\draw [line width=0.5mm,black,dotted]  (9.8,4.5) --(8.9,2.7);
\draw [line width=0.5mm,black]  (8.5,2) -- (8,1);
\draw [line width=0.5mm,black]  (11,5) -- (11.5,6);
\draw [line width=0.5mm,black,dotted]  (10.8,4.5) --(9.9,2.7);
\draw [line width=0.5mm,black]  (9.5,2) -- (9,1);
\draw [line width=0.5mm,black]  (11.5,2) -- (11,1);
\draw [-stealth,line width=1mm] (0.1,1) -- (0.3,1);
\draw [-stealth,line width=1mm] (0.1,2) -- (0.3,2);
\draw [-stealth,line width=1mm] (0.1,5) -- (0.3,5);
\draw [-stealth,line width=1mm] (0.1,6) -- (0.3,6);
\draw [-stealth,line width=1mm] (0.1,7) -- (0.3,7);
\draw [-stealth,line width=1mm] (0.1,8) -- (0.3,8);
\draw [-stealth,line width=1mm] (5.95,1) -- (6.15,1);
\draw [-stealth,line width=1mm] (5.95,2) -- (6.15,2);
\draw [-stealth,line width=1mm] (5.95,5) -- (6.15,5);
\draw [-stealth,line width=1mm] (5.95,6) -- (6.15,6);
\draw [-stealth,line width=1mm] (5.95,7) -- (6.15,7);
\draw [-stealth,line width=1mm] (5.95,8) -- (6.15,8);
\draw [-stealth,line width=1mm] (11.9,1) -- (12.1,1);
\draw [-stealth,line width=1mm] (11.9,2) -- (12.1,2);
\draw [-stealth,line width=1mm] (11.9,5) -- (12.1,5);
\draw [-stealth,line width=1mm] (11.9,6) -- (12.1,6);
\draw [-stealth,line width=1mm] (11.9,7) -- (12.1,7);
\draw [-stealth,line width=1mm] (11.9,8) -- (12.1,8);
\draw [-stealth,line width=1mm] (0.735,1.5) -- (0.855,1.3);
\draw [-stealth,line width=1mm] (2.735,1.5) -- (2.855,1.3);
\draw [-stealth,line width=1mm] (3.735,1.5) -- (3.855,1.3);
\draw [-stealth,line width=1mm] (4.735,1.5) -- (4.855,1.3);
\draw [-stealth,line width=1mm] (0.735,5.5) -- (0.855,5.3);
\draw [-stealth,line width=1mm] (1.735,5.5) -- (1.855,5.3);
\draw [-stealth,line width=1mm] (2.735,5.5) -- (2.855,5.3);
\draw [-stealth,line width=1mm] (1.25,6.5) -- (1.36,6.3);
\draw [-stealth,line width=1mm] (2.25,6.5) -- (2.36,6.3);
\draw [-stealth,line width=1mm] (1.75,7.5) -- (1.86,7.3);
\draw [-stealth,line width=1mm] (7.2,1.4) -- (7.27,1.53);
\draw [-stealth,line width=1mm] (8.2,1.4) -- (8.27,1.53);
\draw [-stealth,line width=1mm] (9.2,1.4) -- (9.27,1.53);
\draw [-stealth,line width=1mm] (11.2,1.4) -- (11.27,1.53);
\draw [-stealth,line width=1mm] (9.2,5.4) -- (9.27,5.53);
\draw [-stealth,line width=1mm] (10.2,5.4) -- (10.27,5.53);
\draw [-stealth,line width=1mm] (11.2,5.4) -- (11.27,5.53);
\draw [-stealth,line width=1mm] (9.7,6.4) -- (9.77,6.53);
\draw [-stealth,line width=1mm] (10.7,6.4) -- (10.77,6.53);
\draw [-stealth,line width=1mm] (10.2,7.4) -- (10.27,7.53);
\end{tikzpicture}
\caption{Planar network representing $A$.}
 \label{PN-a}
\end{figure}
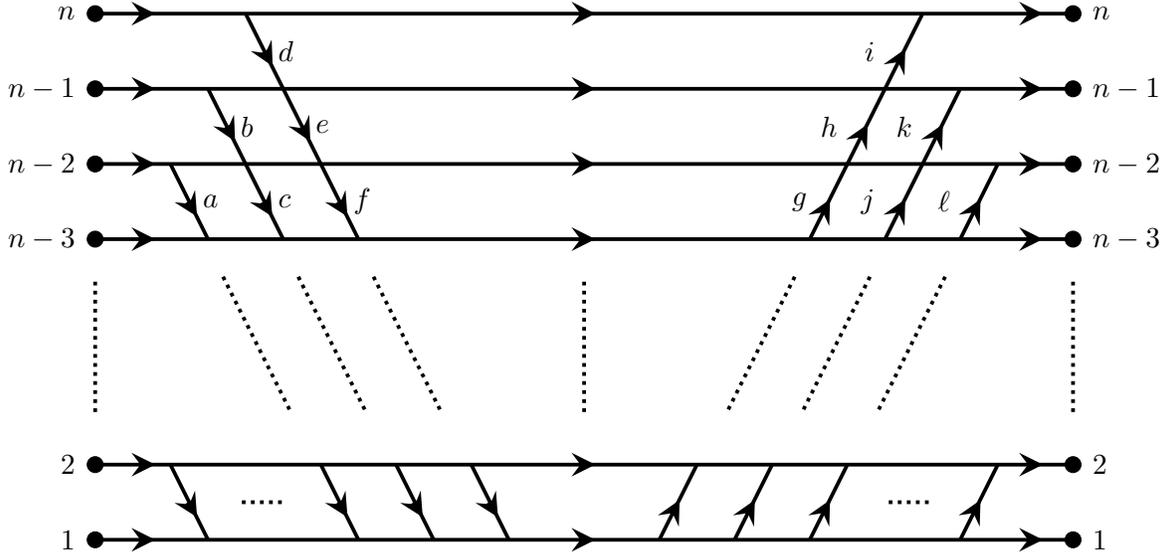
By examining the disjoint paths in the planar network, Figure \ref{PN-a}, the entries of $S$ are as follows:
\begin{align*}
s_{1,1} &= 1,\ \ \  s_{1,2} = h+k,\ \ \ s_{1,3} = hi,\ \ \  s_{1,4} = jh + \ell h + \ell k,\\
s_{2,1} &= b + e,\ \ \  s_{2,2} = b h + e h + b k + e k + 1,\ \ \ s_{2,3} = b h i + e h i + i
,\\
s_{2,4} &= b h j + e h j + b h \ell + e h \ell + b k \ell + e k \ell + g + j + \ell,\\
s_{3,1} &= d e,\ \ \ s_{3,2} = d e h + d e k + d,\ \ \  s_{3,3} = d e h i + d i + 1,\\
s_{3,4} &= d e h j + d e h \ell + d e k \ell + d g + d j + d \ell,\\
s_{4,1} &= a b + a e + c e,\ \ \ s_{4,2} = a b h + a e h + c e h + a b k + a e k + c e k + a + c + f,\\
s_{4,3} &= a b h i + a e h i + c e h i + a i + c i + f i,\\
s_{4,4} &= a b h j + a e h j + c e h j +a b h \ell + a e h \ell + c e h \ell + a b k \ell + a e k \ell\ +\\ 
&\hspace*{0.5cm} c e k \ell + a g + c g + f g + a j + c j + f j + a \ell + c \ell + f \ell + 1.
\end{align*}
After a straightforward computation we obtain that  
\begin{align}\label{3_by_3_minors}
    \det(S[\{1,2,3\},\{1,3,4\}]) &= -g-j-\ell < 0, \nonumber\\
    \det(S[\{1,2,3\},\{2,3,4\}]) &= -gh-gk-jk<0,\nonumber\\
    \det(S[\{1,2,4\},\{1,3,4\}]) &= i  >0,\nonumber\\
    \det(S[\{1,2,4\},\{2,3,4\}])&= ik >0,
\end{align}
and 
\begin{align}\label{another3_by_3_minors}
    \det(S[\{1,3,4\},\{1,2,3\}]) &= -a-c-f < 0, \nonumber\\
    \det(S[\{2,3,4\},\{1,2,3\}]) &= -bc-bf-ef<0,\nonumber\\
    \det(S[\{1,3,4\},\{1,2,4\}]) &= d  >0,\nonumber\\
    \det(S[\{2,3,4\},\{1,2,4\}])&= bd >0.
\end{align}
This implies that $S$ is not $TP_3$, and hence, $C_{n-2}(A)$ is not $TP_3$. Now, we prove the statement for $2\leq k\leq n-3$. So assume $A$ is $TP_{k+2}$ and consider any submatrix $B$ of $A$ of order $k+2$. Notice that, $B$ is $TP$, and $C_k(B)$ is a submatrix of $C_k(A)$. By the previous case $C_k(B)$ is not $TP_3$, and hence $C_k(A)$ is not $TP_3$. This completes the proof. 
\end{proof}

\begin{remark}\label{rem:any_ordering_of_compounds}
    Recall that in the definition of compound matrix $C_k(A)$ we use lexicographic ordering of subsets. One may wonder whether using a different ordering of subsets could alter the result of Theorem \ref{main_thm_1}. Unfortunately, that is not the case, since in any ordering at least one of the minors from (\ref{3_by_3_minors}) is negative. 
    
    Indeed, any permutation of rows and columns, simultaneously, of the matrix $C_{n-2}(A)$ induces a permutation on the rows and columns of the matrix $S$. Considering all $4!$ permutations and using the fact that an odd permutation changes the sign of a determinant while an even permutation does not, we can conclude that in any permutation at least one of the minors from (\ref{3_by_3_minors}) is negative. 
\end{remark}

We provide an example which shows that the result of Theorem \ref{main_thm_1} can not be improved to ``not $TP_2$''. 

\begin{example}\label{example_compound_can_be_TP2}
    Let us consider the following matrix
    \begin{align*}
        A = \begin{bmatrix}
            1/2 & 1/3 & 1/4 & 1/5\\
            1/3 & 1/4 & 1/5 & 1/6\\
            1/4 & 1/5 & 1/6 & 1/7\\
            1/5 & 1/6 & 1/7 & 1/8
        \end{bmatrix}.
    \end{align*}
    Its second compound is given by 
    \begin{align*}
       \displaystyle{ C_2(A) = \begin{bmatrix}
{1}/{72} & {1}/{60} & {1}/{60} & {1}/{240} & {1}/{180} & {1}/{600} \\
{1}/{60} & {1}/{48} & {3}/{140} & {1}/{180} & {4}/{525} & {1}/{420} \\
{1}/{60} & {3}/{140} & {9}/{400} & {1}/{168} & {1}/{120} & {3}/{1120} \\
{1}/{240} & {1}/{180} & {1}/{168} & {1}/{600} & {1}/{420} & {1}/{1260} \\
{1}/{180} & {4}/{525} & {1}/{120} & {1}/{420} & {1}/{288} & {1}/{840} \\
{1}/{600} & {1}/{420} & {3}/{1120} & {1}/{1260} & {1}/{840} & {1}/{2352}
        \end{bmatrix}}
    \end{align*}
One can check that $A$ is $TP_4$, and, moreover $C_2(A)$ is also $TP_2$. 
\end{example}

We note in passing (see \cite{MR}) that the example of a $4$-by-$4$ totally nonnegative Green's matrix provided in \cite{MR} does not have a totally nonnegative second compound matrix as claimed. In fact, the $2$-by-$2$ minor in rows $\{2,3\}$ and columns $\{3,4\}$ is negative.

However, computationally it seems rare to find totally positive matrices such that all of its compounds are $TP_2$. It would be interesting to find out a characterization or an infinite family of totally positive matrices such that all of its compound matrices are $TP_2$. 

Next we prove our second main theorem. 

\begin{proof}[Proof of Theorem \ref{main_thm_2}]
  For the first statement, assume $A$ is $TP_{k+2}$. We know that $D_k(A)$ is $TP_1$. Now, consider an arbitrary $2 $-by-$ 2$ contiguous submatrix of $D_k(A)$ indexed by the rows $\{i,i+1\}$ and columns $\{j,j+1\}$. Then we have 
    \begin{equation}
        \det D_k(A)[\{i,i+1\},\{j,j+1\}] = M_{i,j}^{(k)}M_{i+1,j+1}^{(k)}-M_{i,j+1}^{(k)}M_{i+1,j}^{(k)} =  M_{i,j}^{(k+1)}M_{i+1,j+1}^{(k-1)}. \label{2x2eqn}\end{equation}
Hence, it follows by Lemma \ref{Dk-entries} that
    \begin{eqnarray*}
        \det D_k(A)[\{i,i+1\},\{j,j+1\}] & = & 
    \det A[\{i,\ldots, i+k+1\},\{j,\ldots, j+k+1\}] \\ & & \hspace*{2cm} \cdot \det A[\{i+1,\ldots, i+k\},\{j+1,\ldots, j+k\}].
    \end{eqnarray*}
   Both   $\det A[\{i,\ldots, i+k+1\},\{j,\ldots, j+k+1\}]$ and $\det A[\{i+1,\ldots, i+k\},\{j+1,\ldots, j+k\}]$ are positive since $A$ is $TP_{k+2}$. So $\det D_k(A)[\{i,i+1\},\{j,j+1\}]>0$, and hence by Fekete's Theorem \ref{fekete}, we have that $D_k(A)$ is $TP_2$.
    For the second claim, assume $A$ is $TP_{k+3}$. Then we know $D_k(A)$ is $TP_2$. Consider an arbitrary $3 $-by-$ 3$ contiguous minor of $D_k(A)$, given by $\theta_{i,j} := \det D_k(A)[\{i,i+1,i+2\},\{j,j+1,j+2\}]$. Applying Sylvester's identity (Theorem \ref{thm:sylvester_identity}) and equation (\ref{2x2eqn}) we have
    \begin{align*}
        \theta_{i,j} = \frac{M_{i,j}^{(k+1)}M_{i+1,j+1}^{(k-1)}
    M_{i+1,j+1}^{(k+1)}M_{i+2,j+2}^{(k-1)}
    -M_{i,j+1}^{(k+1)}M_{i+1,j+2}^{(k-1 )}M_{i+1,j}^{(k+1)}M_{i+2,j+1}^{(k-1 )}}{M_{i+1,j+1}^{(k)}}. 
   \end{align*}
   Observe that 
 $M_{i,j}^{(k+1)}M_{i+1,j+1}^{(k+1)}-M_{i,j+1}^{(k+1)}M_{i+1,j}^{(k+1)} =  M_{i,j}^{(k+2)}M_{i+1,j+1}^{(k)}$, and since $A$ is assumed to be $TP_{k+3}$ it follows that $M_{i,j}^{(k+2)}M_{i+1,j+1}^{(k)}>0$ which implies 
$M_{i,j}^{(k+1)}M_{i+1,j+1}^{(k+1)}>M_{i,j+1}^{(k+1)}M_{i+1,j}^{(k+1)}$. Similarly, we have
$M_{i+1,j+1}^{(k-1)}M_{i+2,j+2}^{(k-1)}>M_{i+1,j+2}^{(k-1)}M_{i+2,j+1}^{(k-1)}>0$. Combining these two inequalities implies
\[M_{i,j}^{(k+1)}M_{i+1,j+1}^{(k-1)}
    M_{i+1,j+1}^{(k+1)}M_{i+2,j+2}^{(k-1)}
    -M_{i,j+1}^{(k+1)}M_{i+1,j+2}^{(k-1 )}M_{i+1,j}^{(k+1)}M_{i+2,j+1}^{(k-1 )} >0,\]
    which proves that 
    $\theta_{i,j} = \det D_k(A)[\{i,i+1,i+2\},\{j,j+1,j+2\}]>0.$ Hence by Theorem \ref{fekete}, $D_k(A)$ is $TP_3$. This completes the proof.  
\end{proof}

We provide an example that shows that in general the result of Theorem \ref{main_thm_2} cannot be improved to $TP_4$
\begin{example}\label{eg_for_Thm_B}
    Let us consider the following matrix. 
    \begin{align*}
        A = \begin{bmatrix}
1 & 18 & 192 & 924 & 2332 & 420 \\
32 & 577 & 6161 & 29692 & 75052 & 13524 \\
425 & 7682 & 82145 & 396687 & 1004887 & 181209 \\
2412 & 43807 & 469784 & 2277800 & 5795144 & 1046584 \\
3080 & 56720 & 613350 & 3009027 & 7751484 & 1406076 \\
1440 & 27360 & 301320 & 1515996 & 4007487 & 733594
\end{bmatrix}
    \end{align*}
 Then 
 \begin{align*}
     D_1(A) = \begin{bmatrix}
    1 & 114 & 8100 & 106304 & 16128 \\
599 & 68863 & 4939267 & 64952080 & 10006080 \\
88991 & 10354673 & 752675392 & 9926679328 & 1566406912 \\
1883080 & 222874970 & 16504110168 & 218585490312 & 35833764288 \\
2592000 & 309614400 & 23156130960 & 307417847085 & 51610862484
     \end{bmatrix}
 \end{align*}
One can check that $A$ is $TP_6$ but $D_1(A)$ is not $TP_4$ (the leading principal minor of order $4$ is negative). 
\end{example}

The following theorem, which can be found in \cite{pinkus}, which may not be the original source, provides a useful criteria to determine the total positivity for a Hankel matrix. 
\begin{theorem} \cite[Theorem 4.4]{pinkus}
\label{criteria_of_TP_Hankel}
    Let $A$ be the $(n+1)$-by-$ (n+1)$ Hankel matrix based on the sequence of scalars $a_0,a_1,\ldots,a_{2n}$, and let $A' := A[\{1,2,\ldots, n\},\{2,3,\ldots,n+1\}]$. Then $A$ is $TP$ if and only if both $A$ and $A'$ are positive definite matrices.  
\end{theorem}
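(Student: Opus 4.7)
The plan is to establish the two directions of the equivalence separately. The forward direction is essentially immediate from the definition: if $A$ is $TP$, then every leading principal minor of $A$ is positive, so $A$ is positive definite; moreover, the $p$th leading principal minor of $A'$ equals the (non-principal) minor $\det A[\{1,\dots,p\},\{2,\dots,p+1\}]$ of $A$, which is also positive by $TP$, so $A'$ is positive definite as well.

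For the reverse direction, assume $A$ and $A'$ are positive definite. The strategy is to use a moment representation for Hankel matrices. By the truncated Stieltjes moment theorem---equivalently, by Gaussian quadrature applied to the positive definite Hankel form associated with $A$---the two positive definiteness conditions together are exactly what is needed to produce distinct positive nodes $0<p_0<p_1<\cdots<p_n$ and positive weights $w_0,\dots,w_n$ such that $a_s=\sum_{i=0}^n w_ip_i^s$ for every $s=0,1,\dots,2n$. Equivalently, one obtains the factorization $A=VWV^{T}$ where $V=(p_i^r)_{r,i=0}^n$ is a Vandermonde matrix and $W=\mathrm{diag}(w_0,\dots,w_n)$.

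Given this factorization, fix any row and column index sets $R=\{r_0<\cdots<r_p\}$ and $C=\{c_0<\cdots<c_p\}$. Cauchy--Binet applied to $A=VWV^{T}$ yields
\[
\det A[R,C]\;=\;\sum_{|T|=p+1}\det V[R,T]\,\Bigl(\prod_{t\in T}w_t\Bigr)\,\det V[C,T].
\]
Each factor $\det V[R,T]=\det(p_t^{r_i})_{i,t}$ is a generalized Vandermonde determinant with ordered integer exponents and ordered positive nodes, hence strictly positive by the classical factorization as (Schur polynomial) $\times$ (ordinary Vandermonde), both of which are positive on positive ordered arguments. Every summand is therefore positive, and we conclude $\det A[R,C]>0$ for every pair of index sets of equal size; thus $A$ is $TP$.

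The main obstacle is the appeal to the truncated Stieltjes moment theorem / Gaussian quadrature representation: this is the classical but nontrivial external input, and it is the only place in the argument where the two hypotheses are used together---positive definiteness of $A$ alone only yields a representing measure on $\R$ (Hamburger), whereas positive definiteness of $A'$ is precisely what forces the quadrature nodes $p_i$ to lie in $(0,\infty)$. An essentially equivalent route that avoids explicit quadrature uses Andr\'eief's (Heine's) identity to express $\det A[R,C]$ directly as an integral of a product of generalized Vandermonde determinants against an abstract representing measure on $(0,\infty)$; positivity then follows from the same Schur--Vandermonde considerations, together with the fact that such a representing measure must have at least $n+1$ points of support.
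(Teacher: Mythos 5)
The paper itself offers no proof of this statement: it is imported directly from Pinkus's book, so there is nothing internal to compare against; your argument has to be judged on its own, and in substance it is the classical moment-theoretic proof and is correct in outline. The forward direction is fine as written (both $A$ and $A'$ are symmetric Hankel matrices whose leading principal minors are minors of $A$). For the converse, the Cauchy--Binet step is sound: once $a_s=\sum_{i=0}^n w_ip_i^s$ with $w_i>0$ and $0<p_0<\cdots<p_n$, every minor of $A=VWV^{T}$ is a sum of products of generalized Vandermonde minors and positive weights, hence positive, so $A$ is $TP$.

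The one place needing more care is precisely the step you flag as the crux. The citable form of the truncated Stieltjes theorem gives a representing measure supported in the closed half-line $[0,\infty)$, possibly with an atom at the origin and possibly with more than $n+1$ atoms; upgrading this to a quadrature identity with exactly $n+1$ distinct nodes in the open half-line is true but requires the canonical-solution construction, not just a citation of solvability: one first checks that the zeros of the $n$th orthogonal polynomial are positive (for a zero $x_0$, writing $q=P_n/(t-x_0)$ gives $x_0=L(tq^2)/L(q^2)$, the numerator a positive quadratic form in $A'$ and the denominator one in $A$), and then adjoins one further node via a suitable choice of $a_{2n+1},a_{2n+2}$ (a flat extension / quasi-orthogonal polynomial argument). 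Alternatively, your fallback via Andr\'eief's identity goes through with any representing measure on $[0,\infty)$ having at least $n+1$ points of support, provided you add one remark: an atom at the origin makes certain generalized Vandermonde factors vanish rather than be positive, but since at most one support point can equal $0$, every minor's expansion still contains a strictly positive term (for minors of order at most $n$ use only positive support points; for the full $(n+1)$-by-$(n+1)$ determinant both exponent sets contain $0$, so the term using the origin is itself positive). With either of these patches your proof is complete, and it follows the standard Gantmacher--Krein/Stieltjes route that underlies the cited result.
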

We make use of the above result to prove Theorem \ref{main_thm_Hankel}.
\begin{proof}[Proof of Theorem \ref{main_thm_Hankel}] 
Let $A$ be a $TP$ Hankel matrix. Then $A'$ is also a $TP$ Hankel matrix (we use the same notation as defined in Theorem \ref{criteria_of_TP_Hankel}). Since $D_{n}(A)$ is a scalar equal to $\det A$, the statement follows for $k=n$. Next, we assume $1\leq k \leq n-1$. It is easy to check that $D_k(A)$ is also a Hankel matrix and $D_k(A)' = D_k(A')$. Therefore, by Theorem \ref{criteria_of_TP_Hankel} it is enough to prove that $D_k(A)$ is a positive definite matrix as $A$ is an arbitrary $TP$ Hankel matrix. Now, since the matrix $A$ is symmetric, it follows that the compound matrix $C_{k+1}(A)$ is also a symmetric matrix. Furthermore, any eigenvalue of $C_{k+1}(A)$ is a product of some subset of $k+1$ eigenvalues of $A$, and consequently, is positive. That is $C_{k+1}(A)$ is a positive definite matrix. Since $D_k(A)$ is a principal submatrix of $C_{k+1}(A)$ so $D_k(A)$ is also a positive definite matrix. Hence, the proof is complete.
\end{proof}

Finally, we prove our last main theorem.  

\begin{proof}[Proof of Theorem \ref{main_thm_3}]
Clearly $A$ is $TP_{k+1}$ by Fekete's Theorem \ref{fekete}. So we prove A is $TP_{k+2}$. Suppose $D_k(A)$ is $TP_2$. Consider an arbitrary contiguous minor of order $k+2$ of $A$ given by $M_{i,j}^{(k+1)}$. An application of Sylvester's identity (Theorem \ref{thm:sylvester_identity}) reveals,
\[ M_{i,j}^{(k+1)} = \frac{M_{i,j}^{(k)}M_{i+1,j+1}^{(k)}-M_{i,j+1}^{(k)}M_{i+1,j}^{(k)}}{M_{i+1,j+1}^{(k-1)}}.\]
Upon inspection we observe that the numerator above is equal to the $2$-by-$ 2$ contiguous minor of $D_k(A)$ given by $\det D_{k}(A) [\{i,i+1\},\{j,j+1\}]$. All such minors are assumed to be positive since $D_k(A)$ is $TP_2$. Further, since $A$ is $TP_k$ all minors of order $k$ are positive; so in particular, $M_{i+1,j+1}^{(k-1)}>0$. Hence $M_{i,j}^{(k+1)}>0$. Thus, all contiguous minors of order $k+2$ of $A$ are positive, so by Theorem \ref{fekete}, $A$ is $TP_{k+2}$.
\end{proof}

\begin{remark}
Looking back at Theorem \ref{main_thm_3} and comparing it with the implications of Theorem \ref{main_thm_2} we can ask if, in general, $A$ is $TP_k$ and $D_k(A)$ is $TP_{3}$, then $A$ is $TP_{k+3}$? Unfortunately, this is not the case in general. Indeed, there exists a $6 $-by-$ 6$ matrix $B$ that is $TP_3$ and satisfies $D_3(B)$ is TP (or $TP_3$ as $D_3(B)$ is $3 $-by-$ 3)$, but $B$ is not $TP_6$.

Suppose $A$ is a $6 $-by-$ 6$ TP matrix, and set $k=3$. Then using \cite[Lemma 9.5.2]{fallat2022totally}, matrix $B=A-tE_{1,1}$ is a $TP_5$ singular matrix, when
$t=\det(A)/\det(A(\{1\})$ and $E_{1,1}$ is the standard basis matrix whose only nonzero entry is $1$ in the $(1,1)$ position. Applying Theorem \ref{main_thm_2}, we know that $D_3(B)$ is $TP_2$. Furthermore, following the argument similar to the proof of Theorem \ref{main_thm_2} we have
\begin{eqnarray*}
    \det D_3(B) & = & 
\frac{M_{1,1}^{(4)}M_{2,2}^{(2)}
    M_{2,2}^{(4)}M_{3,3}^{(2)}
    -M_{1,2}^{(4)}M_{2,3}^{(2)}
    M_{2,1}^{(4)}M_{3,2}^{(2)}}{M_{2,2}^{(3)}} \\
    & = & 
  \frac{M_{1,1}^{(4)}M_{2,2}^{(4)}\left( M_{2,2}^{(2)}
    M_{3,3}^{(2)}
    -M_{2,3}^{(2)}M_{3,2}^{(2)}\right)}{M_{2,2}^{(3)}} >0. 
\end{eqnarray*}
The second equality follows since 
\[ 0 = \det(B) = \frac{M_{1,1}^{(4)}M_{2,2}^{(4)} - M_{1,2}^{(4)}M_{2,1}^{(4)}}{M_{2,2}^{(3)}},\] and
$M_{2,2}^{(3)}>0$, since $B$ is $TP_5$, while $M_{2,2}^{(2)}
    M_{3,3}^{(2)} >M_{2,3}^{(2)}M_{3,2}^{(2)}$ since $D_3(B)$ is $TP_2$. Thus $D_3(B)$ is $TP_3$. However $B$ is not $TP_{k+3}$, and $B$ is not $TP_6$. 
\end{remark}

\vspace{.5cm}

\subsection*{Acknowledgements}
The workshop “Theory and applications of total positivity", was held at the
American Institute of Mathematics (AIM), from July 24-28, 2023. The authors thank AIM and NSF for their support. In addition, we also thank Sunita Chepuri, Juergen Garloff, Olga Katkova, Daniel Soskin, and Anna Vishnyakova for the thoughtful discussions that occurred during this AIM workshop.

S.M. Fallat was supported in part by an NSERC Discovery Research Grant, Application No.: RGPIN-2019-03934. The work of the PIMS Postdoctoral Fellow H. Gupta leading to this publication was supported in part by the Pacific Institute for the Mathematical Sciences

\vspace{.5cm}



\begin{thebibliography}{99}
\bibitem {dodgson}  F. F. Abeles. Dodgson condensation: the historical and mathematical development of an experimental method.
\emph{Linear Algebra Appl.}, 429(2-3):429–438, 2008.

\bibitem {And87} T. Ando. Totally positive matrices. \emph{Linear Algebra Appl.}, 90:165–219, 1987.

\bibitem {Bre95} F. Brenti. Combinatorics and total positivity. \emph{J. Combin. Theory Ser. A}, 71(2):175–218, 1995. 

\bibitem {CC98} T. Craven and G. Csordas. A sufficient condition for strict total positivity of a matrix. \emph{Linear and Multilinear
Algebra}, 45(1):19–34, 1998.

\bibitem {Cry73} C. W. Cryer. The LU -factorization of totally positive matrices. \emph{Linear Algebra and Appl.}, 7:83–92, 1973.

\bibitem {Cry76}  C. W. Cryer. Some properties of totally positive matrices. \emph{Linear Algebra and Appl.}, 15(1):1–25, 1976.

\bibitem {Carrol} C. L. Dodgson. Condensation of determinants, being a new and brief method for computing their arithmetical values. \emph{Proceedings of the Royal Society of London}, 15:150–155, 1866.

\bibitem {Fal01}  S. M. Fallat. Bidiagonal factorizations of totally nonnegative matrices. \emph{Amer. Math. Monthly}, 108(8):697–712,
2001.

\bibitem {fallat2022totally}  S. M. Fallat and C. R. Johnson. \emph{Totally nonnegative matrices}. Princeton Series in Applied Mathematics. Princeton
University Press, Princeton, NJ, 2011.

\bibitem {Fek13}  M. Fekete. Uber ein problem von laguerre. \emph{Rend. Conti. Palermo}, 34:110–120, 1913.

\bibitem {FZ00} S. Fomin and A. Zelevinsky. Total positivity: tests and parametrizations. \emph{Math. Intelligencer}, 22(1):23–33, 2000.

\bibitem {GK02}  F. P. Gantmacher and M. G. Krein. \emph{Oscillation matrices and kernels and small vibrations of mechanical systems}. AMS Chelsea Publishing, Providence, RI, revised edition, 2002. Translation based on the 1941 Russian original, Edited and with a preface by Alex Eremenko.

\bibitem {GP92}  M. Gasca and J. M. Pe\~{n}a. Total positivity and Neville elimination. \emph{Linear Algebra Appl.}, 165:25–44, 1992.

\bibitem {GP94}  M. Gasca and J. M. Pe\~{n}a. A matricial description of Neville elimination with applications to total positivity. \emph{Linear Algebra Appl.}, 202:33–53, 1994.

\bibitem {HJ85}  R. A. Horn and C. R. Johnson. \emph{Matrix analysis}. Cambridge University Press, Cambridge, 1985.

\bibitem {Kar68} S. Karlin. \emph{Total positivity}. Vol. I. Stanford University Press, Stanford, Calif, 1968.

\bibitem {katkova2006sufficient}  O. M. Katkova and A. M. Vishnyakova. On sufficient conditions for the total positivity and for the multiple positivity of matrices. \emph{Linear Algebra Appl.}, 416(2-3):1083–1097, 2006.

\bibitem {Loe55}  C. Loewner. On totally positive matrices. \emph{Math. Z.}, 63:338–340, 1955.

\bibitem {MR}  M. M. A. Murad and M. A. Ramadan. On the total positivity of compound matrices. \emph{Ci\^{e}ncia e T\'{e}cnica Vitvini-cola}, 30(10), 2015.

\bibitem {pinkus}  A. Pinkus. \emph{Totally positive matrices}. Number 181. Cambridge University Press, Cambridge, 2010.

\bibitem {Sch30}  I. Schoenberg. Uber variationsvermindernde lineare transformationen. \emph{Math. Z.}, 32:321–328, 1930.

\bibitem {Whi52}  A. M. Whitney. A reduction theorem for totally positive matrices. \emph{J. Analyse Math.}, 2:88–92, 1952.
\end{thebibliography}
\end{document}